\documentclass{amsart}
\usepackage{amsmath}
\usepackage{amssymb}
\usepackage{amsthm}
\usepackage{amsfonts}
\usepackage{url}
\usepackage{hyperref}
\usepackage{appendix}
\usepackage{verbatim}
\usepackage{graphicx,color}
\usepackage{lineno}

\usepackage{thmtools,  thm-restate} 


\setlength{\topmargin}{-1.cm}
\setlength{\headsep}{1.6cm}
\setlength{\evensidemargin}{.7cm}
\setlength{\oddsidemargin}{.7cm}
\setlength{\textheight}{21.cm}
\setlength{\textwidth}{15.2cm}

\title[Grothendieck pairs and uncountable ambiguity]
{Relatively hyperbolic groups,  Grothendieck pairs,  and uncountable profinite ambiguity among fibre products}

\author{M.R. ~Bridson}
\author{A.W. ~Reid} 
\address{\newline Mathematical Institute, 
\newline Andrew Wiles Building,
\newline University of Oxford,
\newline Oxford OX2 6GG, UK}
\email{bridson@maths.ox.ac.uk}
\address{\newline Department of Mathematics,
\newline Rice University, 
\newline Houston, TX 77005, USA.
\newline School of Mathematics,
\newline Korea Institute for Advanced Study (KIAS), Seoul, 02455, Korea.}
\email{alan.reid@rice.edu}
 
\thanks{The second author was supported by  NSF grant DMS-1812397.  For the purpose of open access, the authors have applied a CC BY public copyright licence to any author accepted manuscript.}

\def\-{\overline}

\def\wh{\widehat}

\def\G{\Gamma}

\def\ssm{\smallsetminus}

\def\La{\Lambda}

 \def\H{\mathbb{H}}
 \def\S{\mathbb{S}}
 \def\Z{\mathbb{Z}}
 \def\R{\mathbb{R}}
 \def\Q{\mathbb{Q}} 
  
 \def\C{\mathbb{C}}

 \def\dirlim{\lim}

\DeclareMathOperator{\SL}{SL} \DeclareMathOperator{\PSL}{PSL}

 \DeclareMathOperator{\PSU}{PSU}

\def\tr{\mbox{\rm{tr}}\, }

\def\G{\Gamma}

\def\<{\langle}
\def\>{\rangle}
\def\ilim{\varprojlim}

\def\GW{\Gamma_{W}}

\def\onto{\twoheadrightarrow} 

\newtheorem{theorem}{Theorem}[section]
\newtheorem{lemma}[theorem]{Lemma}
\newtheorem{corollary}[theorem]{Corollary}

\newtheorem{proposition}[theorem]{Proposition}

\theoremstyle{definition} 
\newtheorem{definition}[theorem]{Definition}
\newtheorem{remark}[theorem]{Remark}

\newtheorem{question}[theorem]{Question}


\begin{document}

\date{17 July 2025}

\begin{abstract} These notes expand upon our lectures on {\em profinite rigidity}
at the international colloquium on randomness, geometry and dynamics, 
organised by TIFR Mumbai at IISER Pune in January 2024.  We are interested in the extent to which groups that arise in 
hyperbolic geometry and 3-manifold topology
are determined by their finite quotients.  The main theme of  these notes is the 
radical extent to which rigidity is lost when one passes from consideration of groups 
with hyperbolic features to consideration of their direct products.
We describe a general method for producing infinite sequences of 
{\em{Grothendieck pairs,}} i.e.~embeddings  $P_i\hookrightarrow G\times G$ inducing isomorphisms of 
profinite completions,  with $G$ fixed  and $P_i$ finitely generated.
In order to apply this method, one needs $G$ to map onto a subgroup of finite index in the commutator subgroup of a group
$\Gamma$ with $H_2(\Gamma,\Z)=0$, and $\Gamma$ should be 
relatively hyperbolic.  By exploiting the flexibility of the construction,
we explain how, under the same hypotheses on $G$, one
can construct {\em uncountable families} of pairwise non-isomorphic subgroups $P_\lambda$ such that
$P_\lambda\hookrightarrow G\times G$ induces an isomorphism of profinite completions.
Examples of groups $G$ satisfying these conditions include the fundamental group of the Weeks manifold
and the fundamental group of the 4-fold branch cover of the figure-8 knot complement. Both of these examples
are profinitely rigid in the absolute sense and in each case Grothendieck pairs account entirely
for the loss of profinite rigidity for $G\times G$: if  $H$ is a  finitely generated group  whose
profinite completion  is isomorphic to that of $G\times G$, then there is an embedding $H\hookrightarrow G\times G$
that is a Grothendieck pair.
\end{abstract}

 
\subjclass{20E26, 20E18 (20F65, 20F10, 57M25) }

\date{16 July, Cambridge}

\keywords{Profinite completion, profinite genus,  fibre products, Grothendieck pairs,  3-manifold groups}

\maketitle

%
%
%
%

\def\L{\Lambda}
\def\D{\Delta}
\def\hr{\hookrightarrow}

\section{Introduction}  

A finitely generated, residually finite group $G$ is {\em profinitely rigid} if any finitely generated, residually finite group $H$
with the same set of finite quotients as $G$ must be isomorphic to $G$; in more technical language, 
$\widehat{H}\cong\widehat{G}\implies H\cong G$, where $\widehat{G}$ is the profinite completion on $G$,
which is defined to be the inverse limit $\ilim  G/N$, where $N$ runs over the set of all finite-index normal
subgroups of $G$.
It is not difficult to show that $\Z^n$ is profinitely rigid, but it is much more difficult to construct examples that 
are full-sized in the sense that they
contain a non-abelian free group: we constructed the first such examples in our work with McReynolds and Spitler \cite{BMRS1, BMRS2}.  
Deviation from profinite rigidity is measured by the  {\em profinite genus} of $G$, which is the set of isomorphism 
classes of finitely generated, residually finite groups $H$ with $\widehat{H}\cong\widehat{G}$.  The   {\em strong profinite genus} of $G$  consists   of isomorphism classes of finitely generated 
groups $H$ for which there is an embedding $u:H\hookrightarrow G$ that induces
an isomorphism $\widehat{u}:\widehat{H}\hookrightarrow \widehat{G}$; if $H\neq G$
then $H\hr G$ is called {\em Grothendieck pair}.  
In \cite{BRS} we exhibited infinite families of Seifert fibered 3-manifolds $M$
with geometry ${\widetilde{{\rm{SL}}}}_2$ such that $\pi_1M$ is profinitely rigid but the
strong profinite genus of $\pi_1M\times \pi_1M$ is infinite.   
In the course of these notes, we will
touch on most of the ideas behind the main construction in \cite{BRS} and explain how they can be extended.

\def\GW{\Gamma_W}

The first examples  of full-sized groups that are profinitely rigid \cite{BMRS1, BMRS2} were
the fundamental groups of certain 2-dimensional and 3-dimensional hyperbolic orbifolds, 
i.e. lattices in ${\rm{PSL}}(2,\R)$ and ${\rm{PSL}}(2,\C)$.     
We  expect  all lattices in ${\rm{PSL}}(2,\R)$ and ${\rm{PSL}}(2,\R)$
to be profinitely rigid,  but our focus here is not on the programme to prove that.
Rather,  the main point that 
we want to emphasize here is how emphatically profinite rigidity fails when one passes from
consideration of hyperbolic groups such as these to consideration of direct products of hyperbolic groups.
The following special case of what we prove illustrates this point nicely.   We frame this theorem in terms of two of
the early examples of 3-manifold groups that were proved to be profinitely rigid:  
from \cite{BMRS1},  the fundamental group of the Weeks manifold,  which is the unique closed hyperbolic 3-manifold of minimal volume; and from \cite{prasad}, the  fundamental group of the hyperbolic manifold obtained by
taking the $4$-fold cyclic  
branched cover of $\S^3$ over the figure-8 knot.

\begin{restatable}{letterthm}{fig8}\label{t:fig8}
Let $\G$ be the fundamental group of either of the Weeks manifold or the 4-fold cyclic  
branched cover of
$\S^3$ over the figure-8 knot.  Then, 
\begin{enumerate}
\item $\G$ is profinitely rigid; 
\item for every finitely generated, residually finite group $\Lambda$, if $\wh{\Lambda}\cong\wh{\G}\times\wh{\G}$ then
there exists an embedding $\Lambda\hr \G\times\G$ inducing this isomorphism of profinite completions;
\item there exist uncountably many pairwise non-isomorphic subgroups $u_i: P_i\hr \G\times\G$
such that $\wh{u}_i: \wh{P}_i\to \wh{\G}\times\wh{\G}$ is an isomorphism, and
\item infinitely many of these $P_i$ are finitely generated.
\end{enumerate} 
\end{restatable}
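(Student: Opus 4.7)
Part (1) is already established in the literature: profinite rigidity of the fundamental group of the Weeks manifold is proved in \cite{BMRS1}, and the analogous result for the $4$-fold cyclic branched cover of $\S^3$ over the figure-$8$ knot is proved in \cite{prasad}.

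For parts (3) and (4) the plan is to apply the general Grothendieck-pair construction described in the abstract, taking $G=\Gamma$. The hypotheses to verify are that $\Gamma$ (or a closely related group $\Gamma'$) is relatively hyperbolic, that $H_2(\Gamma',\Z)=0$, and that $G$ surjects onto a finite-index subgroup of $[\Gamma',\Gamma']$. The first is trivial: each $\Gamma$ is word-hyperbolic because the underlying manifold is closed and hyperbolic. For the second, both manifolds are closed, orientable, and have finite $H_1$ (for example $\Z/5\oplus\Z/5$ for the Weeks manifold), so Poincar\'e duality combined with universal coefficients yields $H_2(\Gamma,\Z)\cong H^1(\Gamma,\Z)\cong \mathrm{Hom}(H_1,\Z)=0$. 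The remaining surjection hypothesis I would meet either by taking $\Gamma'=\Gamma$ or by passing to a suitable extension preserving the vanishing of $H_2$. Granting the hypotheses, the construction produces an uncountable family $\{P_\lambda\hookrightarrow\Gamma\times\Gamma\}$ of Grothendieck pairs, which gives (3). Since there are only countably many isomorphism classes of finitely generated groups, most members of the family cannot be finitely generated, but to extract infinitely many that are (part (4)), I would apply the \emph{1-2-3 Theorem} of Baumslag--Bridson--Miller--Short, applicable here precisely because $H_2(\Gamma,\Z)=0$, to conclude that the fibre products coming from epimorphisms of $\Gamma$ onto finitely presented quotients are themselves finitely presented.

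For (2), let $\Lambda$ be finitely generated and residually finite with $\widehat{\Lambda}\cong\widehat{\Gamma}\times\widehat{\Gamma}$. Residual finiteness embeds $\Lambda$ as a dense subgroup of $\widehat{\Gamma}\times\widehat{\Gamma}$, and the two projections yield finitely generated dense subgroups $L_i\le\widehat{\Gamma}$. The plan is to use a strengthening of profinite rigidity satisfied by both examples---implicit in the abstract's phrase ``profinitely rigid in the absolute sense''---to conjugate each $L_i$ inside $\widehat{\Gamma}$ into $\Gamma$; a Goursat-style classification of subgroups of $\widehat{\Gamma}\times\widehat{\Gamma}$ that project densely onto each factor will then upgrade this to a simultaneous conjugation placing $\Lambda$ inside $\Gamma\times\Gamma$, with the embedding inducing the given isomorphism of profinite completions.

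The principal obstacle will be this last step. Ordinary profinite rigidity pins down $L_i$ abstractly only after one knows $\widehat{L_i}\cong\widehat{\Gamma}$, whereas controlling the actual embedding $L_i\hookrightarrow\widehat{\Gamma}$ up to conjugation requires genuinely more, and I expect the argument to draw on the arithmetic and geometric features of the Weeks manifold and the figure-$8$ branched cover that underpin the proofs of their profinite rigidity in \cite{BMRS1} and \cite{prasad}, together with the subgroup separability (LERF) of hyperbolic $3$-manifold groups.
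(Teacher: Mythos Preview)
Your treatment of (1) is correct.

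For (3) and (4) there is a real gap. Theorem~\ref{t:lots-of-P} requires $G$ to surject onto a finite-index subgroup of $[\Gamma',\Gamma']$. With $G=\Gamma'=\Gamma$ this is not available: $\Gamma$ is not perfect (for instance $H_1(\Gamma_W)\cong(\Z/5)^2$), and there is no evident surjection from $\Gamma$ onto $[\Gamma,\Gamma]$ or any finite-index subgroup of it. Your Poincar\'e-duality computation $H_2(\Gamma,\Z)=0$ is correct but addresses the wrong group. The paper resolves this by taking $\Gamma'$ to be a specific \emph{larger} orbifold group ($\Gamma_{\mathcal O}^1$ for the Weeks manifold, $\Delta_4$ for the branched cover) and proving two facts: first, $\Gamma$ \emph{equals} the commutator subgroup of $\Gamma'$, so the identity map supplies the required surjection; second, $H_2(\Gamma',\Z)=0$, which is obtained not by Poincar\'e duality but via the balanced-presentation criterion (Lemma~\ref{l:balanced}), using that each orbifold arises by Dehn filling a 2-bridge knot complement.

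Your argument for (4) contains a further error. The 1-2-3 Theorem requires the kernel to be finitely generated and the quotient $Q$ to be of type $F_3$; neither is established here, and the vanishing of $H_2(\Gamma,\Z)$ is not among its hypotheses. The paper explicitly notes that these fibre products are not expected to be finitely presented. Part (4) claims only \emph{finite generation}, and that follows from the much softer 0-1-2 Lemma (Proposition~\ref{p:PT}(1)): the fibre product of a finitely generated $G$ over a finitely presented $Q$ is finitely generated. The infinitely many finitely generated $P_i$ come from the finitely presented stages $\tilde Q_i$; the uncountable family in (3) arises from the direct limits $\tilde Q_\infty$, which are typically not finitely presented.

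For (2) your plan is in the right spirit but misses the mechanism. The paper does not conjugate dense subgroups inside $\widehat\Gamma$, invoke LERF, or run a Goursat argument. Instead it uses \emph{Galois rigidity}: the Zariski-dense $\PSL(2,\C)$-representations of $\Gamma\times\Gamma$ split into two families, one per factor, and because $\Gamma$ is Galois rigid with integral traces this control transfers to any $\Lambda$ with $\widehat\Lambda\cong\widehat{\Gamma\times\Gamma}$. Matching the arithmetic invariants (trace field and quaternion algebra) then forces the images of $\Lambda$ under the two coordinate projections to land in $\Gamma$, producing the embedding.
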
 
 
The proof that the groups $\G$ in Theorem \ref{t:fig8} 
are profinitely rigid is based on the method of {\em Galois rigidity} which we shall recall in
Section \ref{s:galois}.  This property imposes control on the ${\rm{PSL}}(2,\C)$ representations of any
finitely generated group with the same profinite completion as $\G$.
While our main theme is that profinite rigidity is
lost when one passes from $\G$ to $\G\times\G$,  it is nevertheless true that Galois
rigidity is inherited by $\G\times\G$, and in favourable circumstances 
this can be used to create an embedding into $\G\times\G$
of any abstract,   finitely generated, residually finite group with the same profinite completion as $\G$;
that is the content of item (2) in  Theorem \ref{t:fig8}.
Items (3) and (4),  in contrast,  are not specific  to delicate features of the 3-manifold groups under consideration:
they  follow from a much more general result concerning Grothendieck
pairs,  Theorem \ref{t:lots-of-P},
which is the   main original feature of these notes. 

The   technique  that we shall describe for constructing the
abundance of Grothendieck pairs  in Theorem \ref{t:lots-of-P} extends a well-established
 train of ideas which we will now explain.
 
Grothendieck \cite{groth} asked if there exist Grothendieck pairs of finitely presented groups. This problem was eventually solved by
Bridson and Grunewald \cite{BG}. Their proof builds in an essential way on an earlier argument of Platonov and Tavgen \cite{PT}
who constructed the first Grothendieck pair of finitely generated groups with an argument whose essence can be abstracted as follows.
\begin{proposition}\label{p:PT}
Let $f:G\to Q$ be an epimorphism of groups, with $G$ finitely generated and $Q$ finitely presented, and
consider the fibre product 
$P=\{(g,h) \mid f(g)=f(h)\} < G\times G$. Then,
\begin{enumerate}
\item $P$ is finitely generated;
\item if $\widehat{Q}=1$ and $H_2(Q,\Z)=0$, then $P\hookrightarrow G\times G$ induces an isomorphism $\widehat{P}\overset{\cong}\rightarrow\widehat{G\times G}$.
\end{enumerate}
\end{proposition}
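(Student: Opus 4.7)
\smallskip
\noindent\textbf{Proof plan.}

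For (1), fix a finite presentation $Q = \langle x_1,\ldots,x_n \mid r_1,\ldots,r_m\rangle$ and pick lifts $\tilde x_i \in G$. Enlarge these to a finite generating set $\{\tilde x_i\}\cup\{y_j\}$ of $G$ with each $y_j \in N := \ker f$; then $N$ is normally generated in $G$ by the finite set $\{r_\ell(\tilde x_\ast)\}\cup\{y_j\}$. Writing $P = \Delta(G)\ltimes (1\times N)$, in which $\Delta(G)$ acts on $1\times N$ exactly as $G$ acts on $N$ by conjugation, $P$ is generated by the finite set
\[
\{(\tilde x_i,\tilde x_i)\}_i \cup \{(y_j,y_j)\}_j \cup \{(1,r_\ell(\tilde x_\ast))\}_\ell \cup \{(1,y_j)\}_j.
\]

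For (2), I split $\wh u:\wh P \to \wh G\times \wh G$ into surjectivity and injectivity. Surjectivity uses only $\wh Q = 1$: given any finite quotient $\pi:G\times G\epi F$, its coordinate restrictions $\pi_i:G\to F$ have finite-index kernels $L_i$, and $G/(NL_i)$ is a finite quotient of $Q$, hence trivial; thus $\pi_i(N) = \pi_i(G)$. Since $N\times 1$ and $1\times N$ both lie in $P$ with commuting images in $F$, we obtain $\pi(P) \supseteq \pi_1(N)\pi_2(N) = \pi_1(G)\pi_2(G) = F$.

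The injectivity is the technical heart and is where $H_2(Q,\Z)=0$ enters. The goal is: every finite-index normal $M\triangleleft P$ contains $K\cap P$ for some finite-index normal $K\triangleleft G\times G$. I first handle the abelian case. The 5-term Lyndon--Hochschild--Serre sequence of $1\to N\times N\to P\to Q\to 1$, combined with $H_2(Q)=0$ and $Q^{\ab}=0$ (the latter forced by $\wh Q=1$ together with finite generation of $Q$), identifies $P^{\ab}$ with $(N\times N)/[P,N\times N]$; analogously, using $H_2(Q\times Q)=0$ (K\"unneth), $(G\times G)^{\ab}$ is identified with $(N\times N)/[G\times G,N\times N] = (N\times N)/([G,N]\times[G,N])$. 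A direct commutator computation, exploiting that $\Delta(G)$ commutes separately with each factor of $1\times N$ and $N\times 1$, gives $[P,N\times N] = [G,N]\times[G,N]$, so the inclusion induces an isomorphism $P^{\ab}\cong (G\times G)^{\ab}$. To pass from abelian to arbitrary finite quotients, I plan to induct through a derived or composition series of the target finite group, applying a relative version of the same 5-term comparison at each stage. The main obstacle I anticipate is arranging this inductive ascent so that the cohomological obstructions to each stage of lifting vanish under our hypotheses --- which is precisely the Stallings-type mechanism furnished by the Hopf-formula interpretation of $H_2(Q,\Z)=0$.
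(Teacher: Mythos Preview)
The paper does not prove Proposition~\ref{p:PT}; it is stated as a known result, attributed to Platonov--Tavgen (with the abstraction due to Bass--Lubotzky). The closest the paper comes is the proof of Lemma~\ref{l:nested}, which illustrates the role of $H_2(Q,\Z)=0$ via a single coinvariants calculation with the 5-term sequence, showing that a certain fibre product has no nontrivial finite quotients. So there is no ``paper's own proof'' to match against here; I will assess your argument on its merits and against that lemma.

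Your treatment of (1) and of surjectivity in (2) is correct and standard; the paper's discussion of fibre products and the argument in Lemma~\ref{l:nested} (with $Q_2=1$) agree with what you wrote. Your abelianisation computation $P^{\mathrm{ab}}\cong (G\times G)^{\mathrm{ab}}$ is also correct: the identity $[P,N\times N]=[G,N]\times[G,N]$ is exactly right, since $P$ surjects onto each factor of $G\times G$ and hence $[(g,h),(n,1)]=([g,n],1)$ realises all of $[G,N]\times 1$.

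The gap is in the inductive step for injectivity, which you flag yourself. The induction \emph{can} be made to work, but the point you need and have not stated is this: for every finite-index subgroup of the form $K_1\times K_2\lhd G\times G$ (and every such $K$ contains one of this form), the intersection $K_1\times K_2\,\cap\,P$ is again an (asymmetric) fibre product over the \emph{same} $Q$, and your 5-term computation goes through verbatim to give $(K_1\times K_2\cap P)^{\mathrm{ab}}\cong (K_1\times K_2)^{\mathrm{ab}}$. With that in hand, a straightforward induction on $|P/M|$ succeeds: pick $M\lneq M_1\lhd P$ with $|P/M_1|<|P/M|$, apply the inductive hypothesis to get $K_1$ with $K_1\cap P\le M_1$, then apply the inductive hypothesis again to the pair $(K_1\cap P,\,K_1)$ and the normal subgroup $M\cap(K_1\cap P)$, whose index is $\le |M_1/M|<|P/M|$. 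Without this observation, your plan to ``apply a relative version of the same 5-term comparison at each stage'' is just a hope; with it, it becomes a proof. Note in particular that you do not need any new cohomological vanishing at deeper stages---it is the same condition $H_2(Q,\Z)=0$ applied repeatedly, because the quotient $Q$ never changes as you pass to finite-index product subgroups.

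By contrast, the argument the paper records (in Lemma~\ref{l:nested}) is a one-shot coinvariants calculation rather than an induction: one passes to the image in $Q\times Q$ and shows the resulting fibre product $K\rtimes\Delta_Q$ has no finite quotients by computing $H_0(Q,H_1(K,\Z))=0$ from the 5-term sequence. That argument is shorter and is the one you will find in the cited sources; your route is more hands-on but, once the missing uniformity is supplied, arrives at the same conclusion.
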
 
Platonov and Tavgen applied this criterion with
 $G$ a free group of rank $4$ 
 and $Q$ Higman's famous 4-generator, 4-relator group with $\widehat{Q}=1$ and $H_2(Q,\Z)=0$.
In the course of constructing counterexamples to the Platonov Conjecture, Bass and Lubotzky \cite{BL} described infinitely 
many Grothendieck pairs of the form $(\G\times\G, P)$ by applying the full force
of Proposition \ref{p:PT} (which they identified as the essence of \cite{PT}).
In order to achieve this, they needed  a technique for mapping hyperbolic groups onto finitely presented groups $Q$ with
$\wh{Q}=1$ and  $H_2(Q,\Z)=0$. This is achieved in two stages. First, a bespoke
small-cancellation argument due to Olshanskii \cite{Ol} (for which
Rips has an alternative, unpublished proof) shows that every non-elementary hyperbolic group $G$ maps onto a finitely presented group
with $\wh{Q}=1$, but one does not have control over $H_2(Q,\Z)$. If $G$ is free, 
or more generally $H^2(G,\Z)=0$, this is not a serious problem -- one can replace 
$Q$ by its universal central extension $\widetilde{Q}$ and lift $G\to Q$ to a surjection $G\to\widetilde{Q}$ -- but for an arbitrary hyperbolic
group $G$ one cannot do this. Indeed, it remains unclear whether an arbitrary
non-elementary hyperbolic group (virtually) maps onto any infinite, finitely presented
group $\widetilde{Q}$ with no finite quotients and $H_2(\widetilde{Q},\Z)=0$. (Bass and Lubotzky get around this by using specific homological
properties of the groups that were their concern.) Such struggles with $H_2(-,\Z)$ will recur as a theme throughout these
notes.

Advances in the understanding of hyperbolic groups and significant advances in the theory of relatively hyperbolic groups, mean that
today one can replace Olshanskii's carefully crafted argument with a more flexible  and conceptually-easier construction. 
In \cite{BRS} we used   this
flexibiity to prove a version of 
the following theorem,  whose proof relies heavily on ideas from \cite{amo} and \cite{abj}.  The new embellishment that we
want to exploit in these notes  lies with the technical condition on the torsion in the quotients $Q_i$.

\begin{restatable}{letterthm}{lotsQ}\label{t:lots-of-Q}  
Let $\G$ be a non-elementary relatively hyperbolic group, let $\pi_0$ be the set of primes $p$
such that $\G$ has an element of order $p$,  and let $p_1,p_2, p_3,\dots$ be an arbitrary sequence of primes.
Then,  there is an infinite sequence of  groups 
$Q_i$ and  epimorphisms $\G\to Q_1\to Q_2\to\dots $ such that
\begin{enumerate}
\item  $\widehat{Q}_i=1$ for $i=1,2,\dots$;
\item if $\G$ is finitely presented, then so is each $Q_i$; 
\item each $Q_i$ has trivial centre;
\item $Q_i$ has an element of prime order $p$  only if $p\in \pi_0\cup \{p_1,\dots, p_i\}$;
\item $Q_i$ is generated by elements of order $p$ for each prime $p\in \{p_1,\dots, p_i\}$;
\item $Q_i\not\cong Q_j$ if $i\neq j$.
\end{enumerate}
\end{restatable}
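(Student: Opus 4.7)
The plan is to build the chain $\G \epi Q_1 \epi Q_2 \epi \cdots$ by iteration, at each stage invoking the machinery of group-theoretic Dehn filling (equivalently, small-cancellation theory) over relatively hyperbolic groups, as developed by Osin, Groves--Manning, and in the sharper form needed here by the authors of \cite{amo} and \cite{abj}. The guiding fact is that for a non-elementary relatively hyperbolic group $H$, one can choose a loxodromic element $g$ and a sufficiently divisible exponent $n$ so that the quotient $H/\langle\!\langle g^n\rangle\!\rangle$ is again non-elementary relatively hyperbolic, every parabolic subgroup embeds, and every torsion element of the quotient is conjugate either to a torsion element of $H$ or to a power of $g$. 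This torsion-control feature is the ``new embellishment'' referred to in the introduction and drives conditions (3)--(5): non-elementarity gives trivial centre (3), while the structure theorem for torsion rules out all primes outside $\pi_0\cup \{p_1,\dots,p_i\}$ at stage $i$.

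To pass from $Q_{i-1}$ to $Q_i$ I would proceed in two phases. \emph{Phase A}: pick a sufficiently rich family of loxodromic elements $g\in Q_{i-1}$ and impose the relations $g^{p_i}=1$, choosing the family so that its images already generate the quotient; iterating if necessary, one arranges that the $p_i$-torsion elements alone generate, yielding~(5) for the new prime. The previously-achieved generation by $p_j$-torsion for $j<i$ survives automatically because the images of those generators still have order exactly $p_j$ (by torsion-control) and still generate (since passing to a quotient preserves generation). \emph{Phase B}: enumerate a countable collection of elements whose normal closure meets every proper finite-index normal subgroup of the current group, and iteratively kill them, at each step first replacing the element by a conjugate, or by its product with a sufficiently high power of a loxodromic element, so that the hypotheses of the Dehn-filling theorem are met. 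Passing to the direct limit yields a group with $\widehat{Q_i}=1$. When $\G$ is finitely presented, the sharper forms of these constructions in \cite{amo, abj} produce finitely presented quotients, giving~(2).

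The non-isomorphism (6) is then book-keeping: by (4) the prime torsion spectrum of $Q_i$ lies in $\pi_0\cup\{p_1,\dots,p_i\}$, while by (5) the prime $p_i$ actually occurs in $Q_i$. Taking the given sequence to consist of distinct primes outside $\pi_0$ (inserting dummy distinct primes and re-indexing if necessary, which is harmless for the statement), one obtains a strictly increasing torsion spectrum, so $Q_i\not\cong Q_j$ for $i<j$.

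The main obstacle is the simultaneous preservation of all six conditions throughout the iteration. Torsion control (4) requires using the precise form of the Dehn-filling theorem that records the structure of torsion in the quotient, not merely its existence; interleaving Phase B with preservation of relative hyperbolicity forces each element one wishes to kill to be replaced, in a controlled way, by one that falls in the loxodromic regime at a sufficiently high power; and maintaining finite presentability when $\G$ is finitely presented is the most delicate point, because a naive iteration adds infinitely many relations and one must compress them into a finite presentation using precisely the constructions of \cite{amo} and \cite{abj}.
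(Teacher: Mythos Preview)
Your plan is in the right spirit—small cancellation over relatively hyperbolic groups is indeed the engine—but the route you describe differs from the paper's and, as written, has a genuine gap at Phase~B.

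The paper does \emph{not} destroy finite quotients by an infinite iteration of Dehn fillings followed by a direct limit. Instead it fixes, once and for all, a finitely presented torsion-free infinite group $B$ with $\widehat{B}=1$ (such groups are available from~\cite{BG}) and invokes the common-quotient theorem of Arzhantseva--Minasyan--Osin (stated as Theorem~\ref{t:osin}) with $G=\Gamma$ and $H=B\ast B$. The resulting common quotient $Q_0$ is simultaneously a quotient of $B\ast B$, so $\widehat{Q_0}=1$ automatically; it is finitely presented whenever $\Gamma$ is; and the torsion clause of Theorem~\ref{t:osin} says every finite-order element of $Q_0$ is an image of one from $\Gamma$ or from $B\ast B$, hence from $\Gamma$ alone. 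Thus (1), (2) and the $i=0$ case of (4) are obtained in a single finitely-presented step, with no limiting procedure. Likewise, to pass from $Q_{i-1}$ to $Q_i$ the paper does not Dehn-fill along loxodromics: it applies Theorem~\ref{t:osin} again with $G=Q_{i-1}$ and $H=C_{p_i}\ast C_{p_i}$ (or $C_2\ast C_2\ast C_2$ if $p_i=2$). Since each free factor $C_{p_i}$ embeds in $Q_i$ and $H$ surjects onto $Q_i$, conditions (4) and (5) are immediate.

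Your Phase~B, by contrast, kills a countably infinite list of elements and then takes a direct limit. That limit will typically fail to be finitely presented, and your appeal to ``sharper forms in \cite{amo,abj}'' does not repair this: what \cite{amo} actually supplies is precisely the common-quotient theorem the paper uses, which circumvents the infinite process by \emph{importing} a ready-made $B$ rather than rebuilding one inside the construction. So the gap is real: your argument as described does not deliver~(2). Your Phase~A could be made to work, but the paper's route via $C_{p_i}\ast C_{p_i}$ is cleaner and avoids having to verify separately that Dehn-filled elements have order exactly $p_i$ and that their images generate.

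Two smaller points. Your claim that ``non-elementarity gives trivial centre'' is too quick: non-elementary relatively hyperbolic groups have \emph{finite} centre, not necessarily trivial centre; the paper handles (3) via clause~(6) of Theorem~\ref{t:osin}. And your manoeuvre for (6)—``inserting dummy distinct primes and re-indexing''—changes the hypotheses, since the statement is for an \emph{arbitrary} sequence of primes; when torsion alone cannot separate the $Q_i$, the paper turns (in the corollary immediately following) to the invariant $\mathfrak{d}(Q_n)$ of Definition~\ref{d:d}, forcing $\mathfrak{d}(Q_{n+1})>\mathfrak{d}(Q_n)$ by amalgamating with the retraproduct groups $H_d$ of~\cite{abj}.
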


The point of arranging the quotients $Q_i$ into a directed system rather than just a set is that we
can form the direct limit $Q_\infty = \dirlim Q_i$ and consider the fibre product associated to $\G\onto Q_\infty$
as well as the fibre products associated to $\G\onto Q_i$.   Our goal is to obtain Grothendieck pairs  using these fibre products, as in  Proposition \ref{p:PT}.  The point of controlling the torsion with a sequence of
primes is that we need an invariant that ensures the groups $Q_\infty$ associated to different sequences are not isomorphic,
giving us uncountably many examples -- see the proof of Theorem \ref{t:lots-of-P}.

Before we can appeal to Proposition \ref{p:PT},  we have to force $H_2(Q_i,\Z)=0$.
It seems unlikely that one can do this in general,  but  using a homological argument 
presented in Lemma \ref{l:schur},
one can do this for subgroups of finite index  in the commutator subgroup  of a
 relatively hyperbolic group with trivial second homology.  

\begin{restatable}{letterthm}{lotsP}\label{t:lots-of-P}
Let $\G$ be a finitely presented, 
non-elementary, relatively hyperbolic group  
and suppose that for infinitely many primes $p$ there is not  
an element of order $p$ in $\G$.
Let $G$ be either a hyperbolic group  in which centralizers of non-trivial elements
are virtually cyclic,  or a central extension of such a group. 

If $H_2(\G,\Z)=0$ and  $G$  maps onto a subgroup of finite index in $[\G, \G]$, then
\begin{enumerate}
\item there exist uncountably many
non-isomorphic  groups $P_\lambda$ with embeddings $P_\lambda\hookrightarrow G\times G$
that induce an isomorphism of profinite completions, and 
\item infinitely many of these $P_\lambda$ are finitely generated.
\end{enumerate}
\end{restatable}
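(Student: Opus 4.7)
My plan is to combine Theorem~\ref{t:lots-of-Q} with the Platonov--Tavgen criterion (Proposition~\ref{p:PT}), using Lemma~\ref{l:schur} to surmount the $H_2$-obstruction and using the freedom to vary prime sequences to generate the continuum of examples.

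Fix an infinite set $\Pi$ of primes disjoint from $\pi_0$, which exists by hypothesis. For each infinite sequence $\lambda=(p_1,p_2,\dots)$ drawn from $\Pi$, Theorem~\ref{t:lots-of-Q} supplies a directed system of surjections $\G\epi Q_1^\lambda\epi Q_2^\lambda\epi\cdots$ of finitely presented groups with $\widehat{Q_i^\lambda}=1$, each perfect (a finitely generated group with trivial profinite completion has trivial abelianisation), in which $Q_i^\lambda$ contains elements of order $p_j$ for each $j\le i$ and has torsion only from $\pi_0\cup\{p_1,\dots,p_i\}$. The direct limit $Q_\infty^\lambda=\dirlim Q_i^\lambda$ has torsion primes exactly $\pi_0\cup\lambda$, so distinct sets $\lambda$ yield non-isomorphic $Q_\infty^\lambda$.

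The next step is to promote $Q_i^\lambda$ to a quotient $\widetilde Q_i^\lambda$ of $G$ satisfying the hypotheses of Proposition~\ref{p:PT}(2). Here Lemma~\ref{l:schur} is the crucial ingredient: starting from $H_2(\G,\Z)=0$, a homological argument produces a finitely presented central extension $\widetilde Q_i^\lambda\epi Q_i^\lambda$ with $\widehat{\widetilde Q_i^\lambda}=1$ and $H_2(\widetilde Q_i^\lambda,\Z)=0$, together with a surjection from the finite-index subgroup $K_0\le[\G,\G]$ onto which $G$ maps. Composing with $G\epi K_0$, and noting that $\widetilde Q_i^\lambda$ has no proper finite-index subgroups so that no index is lost, yields a surjection $G\epi\widetilde Q_i^\lambda$. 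The finitely many extra torsion primes contributed by the Schur multiplier of $Q_i^\lambda$ can be absorbed into $\lambda$ from the outset. Proposition~\ref{p:PT} then produces finitely generated fibre products $P_i^\lambda\hr G\times G$ inducing $\widehat{P_i^\lambda}\cong\widehat{G\times G}$.

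Conclusion~(2) follows from the ascending chain $P_1^\lambda\subset P_2^\lambda\subset\cdots$: the target $\widetilde Q_i^\lambda$ is recoverable from the abstract group $P_i^\lambda$ as the quotient by an intrinsically defined normal subgroup, identifiable through the semidirect-product structure $P_i^\lambda\cong(\ker f_i)\rtimes G$ arising from the two coordinate projections, and the $\widetilde Q_i^\lambda$ differ by their torsion spectra. For conclusion~(1) I let $\lambda$ range over continuum many sequences in $\Pi$ and pass to the direct limit, obtaining fibre products $P_\infty^\lambda\hr G\times G$ that still induce profinite isomorphisms but are generally not finitely generated; a counting argument then completes the proof, since each abstract isomorphism class of $P_\infty^\lambda$ determines $\widetilde Q_\infty^\lambda$ up to isomorphism by the same recovery, $\widetilde Q_\infty^\lambda$ determines $\lambda$ up to finite symmetric difference via its set of torsion primes, and hence only countably many $\lambda$'s can yield a given isomorphism class. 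The principal technical obstacle is the homological input of Lemma~\ref{l:schur}: the awkward point is that $\G$ itself need not be perfect---for example, $\pi_1$ of the Weeks manifold has abelianisation $\Z/5\oplus\Z/5$---so the universal central extension of $Q_i^\lambda$ need not lift directly along $\G\epi Q_i^\lambda$; one must descend to $K_0\le[\G,\G]$, use a Lyndon--Hochschild--Serre argument for $1\to[\G,\G]\to\G\to\G^{ab}\to1$ together with $H_2(\G,\Z)=0$ to control the relevant second cohomology with abelian coefficients, and then transport the lift back along $G\epi K_0$. Once this is in place, the rest is a fairly mechanical application of Theorem~\ref{t:lots-of-Q} and Proposition~\ref{p:PT}.
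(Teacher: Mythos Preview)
Your overall architecture matches the paper's: build the tower $Q_i^\lambda$ from Theorem~\ref{t:lots-of-Q}, pass to universal central extensions via Lemma~\ref{l:schur} to kill $H_2$, form fibre products, and distinguish them by the torsion spectra of the targets. But the step where you claim that $\widetilde Q_i^\lambda$ is ``recoverable from the abstract group $P_i^\lambda$ as the quotient by an intrinsically defined normal subgroup'' is the heart of the matter, and you have not actually carried it out. The semidirect-product decomposition $P_i\cong(\ker f_i)\rtimes G$ is visible only once you know the embedding $P_i\hookrightarrow G\times G$; an abstract isomorphism of $P_i$ need not respect it. This is precisely where the hypothesis on $G$ --- that it is hyperbolic with virtually cyclic centralisers, or a central extension of such --- enters, and you never invoke it. The paper's argument is that $(N_i\times 1)\cup(1\times M_i)$ is characterised inside $P_i$ as the set of elements whose centraliser is \emph{not} virtually cyclic (every element of $N_i\times 1$ commutes with all of $1\times M_i$, while a generic element $(g,h)$ has centraliser contained in $C_G(g)\times C_G(h)$); hence $N_i\times M_i$ is an invariant of the abstract isomorphism type, and so is $P_i/(N_i\times M_i)\cong\widetilde Q_i$. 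Without this, your ``recovery'' is an assertion, not an argument.

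Two smaller points. First, your plan to ``absorb'' the torsion primes of the Schur multiplier into $\lambda$ is circular: $H_2(Q_i^\lambda,\Z)$ depends on $Q_i^\lambda$, which depends on $\lambda$, so you cannot choose $\lambda$ in advance to contain those primes. The paper sidesteps this by arranging $Q_i$ to be centreless (item (3) of Theorem~\ref{t:lots-of-Q}), so that $\widetilde Q_i/Z(\widetilde Q_i)=Q_i$; one then distinguishes the $P_i$ via the torsion in $Q_i$ rather than in $\widetilde Q_i$. Second, for the limit groups $P_\infty^\lambda$, Proposition~\ref{p:PT} does not apply directly because $\widetilde Q_\infty^\lambda$ is not finitely presented; you need something like Lemma~\ref{l:nested} together with the observation that homology commutes with direct limits, so $H_2(\widetilde Q_\infty^\lambda,\Z)=0$.
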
 

The case where $\G=G$ in Theorem \ref{t:lots-of-P} restricts us to perfect groups, but there are also 
many other groups to which the main idea behind Theorem \ref{t:lots-of-P} applies: the key property
that we require of $G$ is that after arranging for it to map onto a group $Q$ with no finite quotients,  we need to be able to
lift the map to get a surjection $G\onto \widetilde{Q}$.  Finitely generated free groups $F$ have this property
and consequently we obtain uncountably many non-isomorphic subgroups $P_\lambda< F\times F$ 
for which the inclusion induces an isomorphism of profinite completions.  In this case, it was already 
known that there are infinitely many Grothendieck pairs $P\hr F\times F$ with $P$ finitely generated \cite{BG}.
In contrast, if $F$ is a finitely generated free, surface of limit group, then there are no 
Grothendieck pairs $P\hr F\times\cdots\times F$ with $P$ {\em finitely presented} \cite{BW}.  

We mention this
in the context of Theorem \ref{t:lots-of-P}, because we want to emphasize that the families of  finitely generated 
subgroups $P_\lambda\hr G\times G$ constructed  will not be finitely presented in
general.  There do exist hyperbolic groups $H$ in which one can use similar ideas to construct infinite sequences of 
Grothendieck pairs $P_n\hr H\times H$ with $P_n$ finitely presented \cite{mrb:jems}, but 
those $H$ are specially crafted to this end,  and we do not know how
to do this when $H$ is a profinitely rigid group.

Whilst discussing the importance of finiteness properties, we should also direct the reader's attention to the
fact that it has been known for some time that there exist uncountable families of finitely generated, residually finite groups
that have the same profinite completion \cite{pyber, nekr}, although of course this phenomenon cannot occur
among the subgroups of a fixed finitely generated (or countable) group.
 
 \medskip

 \noindent{\bf Acknowledgements:}~{\em The first author thanks Stanford University for hosting him for a 
 productive sabbatical and the second author   thanks the Korean Institute for Advanced Study (KIAS) for its support and hospitality.
Both authors thank TIFR Mumbai and IISER Pune for the invitations to speak in January $2024$.
We are also grateful to ICMAT Madrid and the Isaac Newton Institute in Cambridge for hosting us at various stages of the writing of this paper.}

\section{Preliminaries}

\subsection{Profinite completions}

The finite quotients of a group $\G$ form a directed system,  since $\G/M\onto\G/N$ if $N>M$. 
The  {\em profinite completion} of  $\G$ is the inverse limit of this system. $\G$ inherits
a topology  from the discrete topology on the finite groups $\G/N$ and in this topology it is a
compact, totally-disconnected group.  The natural map
$\G\to \wh{\G}$ is injective if and only if $\G$ is residually finite.  A morphism of discrete groups
$u:\G_1\to \G_2$ induces a continuous morphism $\wh{u}:\wh{\G}_1\to \wh{\G}_2$. We are interested
in knowing when the inclusion of a subgroup $P\hr \G$ induces an isomorphism $ \wh{P}\cong \wh{\G}$.
When $\G$ is finitely generated, this amounts to proving two things: (1) for surjectivity,  that for every
finite group $G$ and every epimorphism $\pi:\G\to G$, the restriction of $\pi$ to $P$ is surjective; and (2)
for injectivity,   every epimorphism $P\onto G$ extends to an epimorphism  from $\G$.

A basic theorem in the field that we shall use without further comment is the following:  finitely generated
groups $\G_1$ and $\G_2$ have the same set of finite quotients if and only if $\wh{\G}_1\cong\wh{\G}_2$ (where the isomorphism can be taken to be an abstract isomorphism due to work of Nikolov and Segal \cite{NS}).

\subsection{Fibre products}

The fibre product $P<\G\times\G$ associated to an epimorphism $\pi: \G\onto Q$ is the subgroup
$$
P = \{ (g,h) \mid \pi(g)=\pi(h)\}.
$$
$P$ is a normal subgroup if and only if  $Q$ is abelian.  If $\G$ is finitely generated and $Q$ is finitely presented, 
then $P$ will be finitely generated (the 0-1-2 Lemma).  The question of when $P$ is finitely presented
is more delicate: it suffices that $\G$ is finitely presented,  $K=\ker\pi$ is finitely generated and $Q$
has a classifying space   with finite 3-skeleton (the 1-2-3 Theorem \cite{BBMS}).  The fibre products
that we will be considering do not satisfy the conditions of the 1-2-3 Theorem and there is no reason to expect them to be finitely presented.

Notable subgroups in $P$ include the diagonal   $\Delta_\G=\{ (g,g) \mid g\in \G\}\cong \G$
and the intersections of $P$ with $\G\times 1$ and $1\times \G$, which are $K_1=K\times 1$ and $K_2=1\times K$.
The semidirect product decomposition  $P = K_1\rtimes\Delta_\G$ (symmetrically,  $ K_2\rtimes \Delta_\G$) is useful,
as is the observation that the action of $(g,g)\in \Delta_\G$ by conjugation on $K\times 1$ is the same as the
action of $(g,1)$, so  the following groups of coinvariants (defined in the next subsection) coincide
\begin{equation}
H_0(\Delta_\G, \,  H_1(K_1,\Z)) \cong H_0(\G, \,  H_1(K,\Z)).
\end{equation}
Another useful observation is that $P/K_1K_2\cong Q$,
because the image of $P$ under $(\pi, \pi): \G\times \G\to Q\times Q$ is the diagonal subgroup.

\subsection{On the homology of groups}

We shall assume that the reader is familiar with basic facts concerning the homology of groups, for which we refer to \cite{brown}.  In particular, the reader will need to recall that $H_1(G,\Z)$ is the abelianisation of $G$, and that
if $N$ is a normal subgroup of a group $G$, then the group of co-invariants $H_0(G, \, H_1(N,\Z))$ is the quotient
of $H_1(N,\Z)$ obtained by factoring out the conjugation action of $G$, i.e. ~killing the subgroup 
$\<[n] - [g^{-1}ng] \mid n\in N,\, g\in G\>$. We shall also need the 5-term exact sequence associated to a
short exact sequence $1\to N\to G\to Q\to 1$, which is 
$$
H_2(Q,\Z)\to H_0(G, \, H_1(N,\Z)) \to H_1(G,\Z)\to H_1(Q,\Z)\to 0.
$$

The following lemma will also be useful.

\begin{lemma}\label{l:balanced} Let $G$ be a group that has a finite presentation with equal numbers of generators and relations. If $H_1(G,\Z)$ is finite then $H_2(G,\Z)=0$.
\end{lemma}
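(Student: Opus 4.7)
The plan is to run a short Euler-characteristic argument on the presentation $2$-complex and then translate to group homology. Fix a balanced presentation $G=\langle x_1,\dots,x_n \mid r_1,\dots,r_n\rangle$ and let $K$ be the associated $2$-complex with a single $0$-cell, $n$ $1$-cells and $n$ $2$-cells, so that $\pi_1(K)\cong G$. The cellular chain complex of $K$ has the form
\[
0\to \Z^n \xrightarrow{\partial_2} \Z^n \xrightarrow{\partial_1} \Z \to 0,
\]
and $\chi(K)=1-n+n=1$. Equating this with $1-b_1(K)+b_2(K)$ yields $b_1(K)=b_2(K)$.

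Next I would invoke the hypothesis: since $H_1(K,\Z)\cong H_1(G,\Z)$ is finite, $b_1(K)=0$, and therefore $b_2(K)=0$. But $H_2(K,\Z)=\ker \partial_2$ sits inside the free abelian group $\Z^n$, hence is itself free abelian. A free abelian group of rank $0$ is trivial, so $H_2(K,\Z)=0$.

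To pass from $H_2(K,\Z)$ to $H_2(G,\Z)$, I would build a classifying space $BG$ by attaching cells of dimension $\ge 3$ to $K$ in order to kill $\pi_2(K)$ and all higher homotopy. Because these higher-dimensional cells contribute nothing to $C_2$, they can only impose relations on $H_2$; consequently $H_2(G,\Z)=H_2(BG,\Z)$ is a quotient of $H_2(K,\Z)$. Since the latter is zero, so is the former, as required. The whole argument is essentially the Euler characteristic identity combined with the standard fact (equivalent to Hopf's formula) that $H_2$ of the presentation $2$-complex surjects onto the Schur multiplier of the group; there is no real obstacle, and the only point requiring a moment of care is observing that $H_2(K,\Z)$ is free abelian, so that vanishing of rank forces vanishing of the group.
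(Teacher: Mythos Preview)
Your proof is correct and follows essentially the same approach as the paper: build the presentation $2$-complex $K$, show $H_2(K,\Z)=0$, and then observe that $H_2(G,\Z)$ is a quotient of $H_2(K,\Z)$ because a classifying space is obtained by attaching cells of dimension at least $3$. The only cosmetic difference is that you package the linear algebra via the Euler characteristic identity $b_1(K)=b_2(K)$ together with freeness of $\ker\partial_2$, whereas the paper argues directly that $\partial_2:\Z^n\to\Z^n$ has finite-index image (since $\partial_1=0$ and $H_1$ is finite) and hence is injective; these are two phrasings of the same rank computation.
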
 

\begin{proof} The standard 2-complex $K$ of a presentation has  one vertex,  one 1-cell for each generator $a$,  oriented
and labelled $a$,  and one 
2-cell for each relator $r$ of the presentation: the attaching map of the 2-cell traces out the loop in the 1-skeleton labelled $r$.  A  classifying space  for the group presented $G$ can be obtained by adding cells of dimension greater than $2$, and hence
$H_2(G,\Z)$ is a quotient of $H_2(K,\Z)$, so in our setting we will be done if we can argue that $H_2(K,\Z)=0$. To see that 
$H_2(K,\Z)=0$ we consider the cellular chain complex of $K$,
$$
 C_2\to C_1 \to  C_0.
$$
By definition,  $C_1 \to  C_0$ is the zero map,  so $H_1(K,\Z)= H_1(G,\Z)$ will only be finite if
the image of $C_2\to C_1$ is of finite index.   By hypothesis, the free abelian groups $C_2$ and $C_1$ have the same rank,
so if the image of $d_2: C_2\to C_1$ has finite index, then $d_2$ must be injective, and therefore 
$H_2(K,\Z) = \ker d_2=0$.
\end{proof}

\subsection{Universal central extensions}
The standard reference for the following well known facts is \cite[pp. 43-47]{milnor}.  
 
 \begin{lemma}\label{l:univ} If $Q$ is a perfect group, then there is a central extension $1\to Z\to \widetilde{Q}\overset{p}\to Q\to 1$
 where
 \begin{enumerate}
 \item $\widetilde{Q}\overset{p}\to Q$ is universal: if the kernel of $E\overset{r}\onto Q$ is central in $E$, then there is a 
 unique homomorphism $f:\widetilde{Q}\to E$ such that $p=r\circ f$;
 \item $H_1(\widetilde{Q},\Z)=H_2(\widetilde{Q},\Z)=0$;
 \item if $Q$ has no non-trivial finite quotients, then neither does $\widetilde{Q}$.
 \end{enumerate}
 If $Q$ is finitely presented, then $H_2(Q,\Z)$ is finitely generated and $\widetilde{Q}$ is finitely presented. 
  \end{lemma}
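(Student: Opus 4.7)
The plan is to follow Milnor's classical construction \cite{milnor}. Fix a free presentation $1\to R\to F\to Q\to 1$ and set $\widetilde{Q}:= [F,F]/[F,R]$. Since $Q$ is perfect, $F=R\cdot[F,F]$, so restriction gives a surjection $[F,F]\to Q$ whose kernel $R\cap[F,F]$ contains $[F,R]$; consequently one obtains a short exact sequence
\[
1\to Z\to \widetilde{Q}\to Q\to 1,\qquad Z=(R\cap[F,F])/[F,R].
\]
Centrality of $Z$ is a routine commutator computation: for $r\in R\cap[F,F]$ and $f\in F$, the commutator $[f,r]$ already lies in $[F,R]$. Hopf's formula identifies $Z$ with $H_2(Q,\Z)$.

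For universality (1), given any central extension $1\to A\to E\to Q\to 1$, I would choose lifts in $E$ of a free basis of $F$ to obtain a homomorphism $\varphi:F\to E$. Since $\varphi(R)\subseteq A$ is central in $E$, the subgroup $[F,R]$ lies in $\ker\varphi$, so the restriction $\varphi|_{[F,F]}$ descends to a map $\widetilde{Q}\to E$ as required. Uniqueness follows from $\widetilde{Q}$ being perfect, and perfectness is immediate from the definition together with $F=R\cdot[F,F]$, which lets one rewrite any generator of $\widetilde{Q}$ (a coset of a commutator in $[F,F]$) as a commutator of two elements of $[F,F]/[F,R]$. This proves (1) and gives $H_1(\widetilde{Q},\Z)=0$. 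For the vanishing of $H_2(\widetilde{Q},\Z)$, I would argue by contradiction using the universal property: a non-split central extension $\widetilde{E}\to \widetilde{Q}$ would compose to a central extension of $Q$, and the universal property together with perfectness of $\widetilde{Q}$ would supply a section, forcing the original extension to split.

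Property (3) is a standard Frattini-style argument. If $N\vartriangleleft\widetilde{Q}$ has finite index, then $NZ/Z$ is a finite-index normal subgroup of $Q$, so by hypothesis $NZ=\widetilde{Q}$; hence the finite quotient $\widetilde{Q}/N\cong Z/(Z\cap N)$ is both abelian and perfect, and therefore trivial. For the finitely presented case, Hopf's formula exhibits $H_2(Q,\Z)$ as a quotient of $R/[F,R]$, which is generated as an abelian group by the finitely many defining relators of $Q$ (they generate $R$ as a normal subgroup of $F$); thus $Z\cong H_2(Q,\Z)$ is a finitely generated abelian group, in particular finitely presented. The group $\widetilde{Q}$ is then an extension of the finitely presented group $Q$ by the finitely presented group $Z$, and the standard presentation-combining recipe (lifts of generators of $Q$ together with generators of $Z$; relations of $Z$; lifts of relations of $Q$ rewritten as words in $Z$; and conjugation relations expressing $\tilde{y}z\tilde{y}^{-1}$ in terms of generators of $Z$) yields a finite presentation of $\widetilde{Q}$.

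The main obstacle in this programme is nothing deep; it is the bookkeeping around Hopf's formula, the verification that $Z$ is central, and the inductive check that perfectness of $Q$ propagates to $\widetilde{Q}$. All remaining items follow formally from the universal property once those facts are in hand.
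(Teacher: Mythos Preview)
The paper does not prove this lemma; it simply cites Milnor \cite[pp.~43--47]{milnor} as the standard reference. Your argument is precisely Milnor's construction and is correct apart from one small gap in the sketch of $H_2(\widetilde{Q},\Z)=0$.

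You assert that a central extension $1\to A\to\widetilde{E}\to\widetilde{Q}\to 1$ composes with $p:\widetilde{Q}\to Q$ to give a central extension of $Q$. But the kernel of the composite $\widetilde{E}\to Q$ is $C:=\pi^{-1}(Z)$, an extension of $Z$ by $A$, and there is no a~priori reason for $C$ to be central in $\widetilde{E}$. The standard repair (which is Milnor's) is to note that for each $c\in C$ the map $e\mapsto[e,c]$ lands in $A$ (since $\pi([e,c])=[\pi(e),\pi(c)]\in[\widetilde{Q},Z]=1$) and is therefore a homomorphism $\widetilde{E}\to A$; being a homomorphism to an abelian group, it vanishes on $[\widetilde{E},\widetilde{E}]$. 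Hence $C$ centralises $[\widetilde{E},\widetilde{E}]$, and since $\widetilde{Q}$ is perfect, $[\widetilde{E},\widetilde{E}]\to Q$ is a surjection with central kernel $C\cap[\widetilde{E},\widetilde{E}]$. Now apply the universal property to this genuine central extension to get $s:\widetilde{Q}\to[\widetilde{E},\widetilde{E}]\subset\widetilde{E}$; the composite $\pi\circ s:\widetilde{Q}\to\widetilde{Q}$ lies over $Q$, so by uniqueness it is the identity, and $s$ is the desired splitting. With this adjustment your argument goes through.
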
 
  
The following lemma from \cite{BRS} will provide a useful mechanism for forcing
quotients to have trivial second homology so that the Platonov-Tavgen criterion (Proposition \ref{p:PT})
can be applied.  This is an idea that originates in \cite{BL}.

\begin{lemma}\label{l:schur}
Let $Q$ be a perfect group with universal central extension $p:\widetilde{Q}\to Q$, let $G$ be a group with $H_2(G,\Z)=0$
and let $F:G\to Q$ be an epimorphism that restricts to $f:[G,G]\to Q$. Then, there exists an epimorphism $\tilde{f}:[G,G]\to\widetilde{Q}$
with $p\circ\tilde{f}=f$.
\end{lemma}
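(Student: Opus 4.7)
The plan is to pull back the universal central extension along $F$ and then pass to commutator subgroups; the hypothesis $H_2(G,\Z)=0$ enters precisely through the five-term exact sequence of the resulting central extension.

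First I would set $Z=\ker p$ and form the pullback
$$
\tilde{G}=\{(g,\tilde{q})\in G\times\widetilde{Q}\mid F(g)=p(\tilde{q})\},
$$
which sits in a central extension $1\to Z\to\tilde{G}\xrightarrow{q}G\to 1$ and carries a projection $\pi:\tilde{G}\to\widetilde{Q}$ that is surjective because $F$ is. Applying the five-term exact sequence in integral homology to this central extension yields
$$
H_2(\tilde{G},\Z)\to H_2(G,\Z)\to Z\to H_1(\tilde{G},\Z)\to H_1(G,\Z)\to 0,
$$
where the third term is $Z$ because $Z$ is central and the action of $G$ on it is trivial. Since $H_2(G,\Z)=0$, the map $Z\to H_1(\tilde{G},\Z)$ is injective. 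But this map is simply the composition $Z\hookrightarrow\tilde{G}\to\tilde{G}/[\tilde{G},\tilde{G}]$, so its kernel is $Z\cap[\tilde{G},\tilde{G}]$; injectivity therefore forces $Z\cap[\tilde{G},\tilde{G}]=1$.

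Consequently $q$ restricts to an injection $[\tilde{G},\tilde{G}]\hookrightarrow[G,G]$, and this restriction is also surjective because every commutator $[a,b]$ in $G$ equals $q([\tilde{a},\tilde{b}])$ for any choice of lifts $\tilde{a},\tilde{b}\in\tilde{G}$. So $q$ restricts to an isomorphism $[\tilde{G},\tilde{G}]\xrightarrow{\cong}[G,G]$, and I would then set
$$
\tilde{f}:=\pi\circ\bigl(q|_{[\tilde{G},\tilde{G}]}\bigr)^{-1}\colon [G,G]\to\widetilde{Q}.
$$
The identity $p\circ\tilde{f}=f$ is immediate from commutativity of the defining pullback square, and surjectivity of $\tilde{f}$ follows from $\pi([\tilde{G},\tilde{G}])=[\pi(\tilde{G}),\pi(\tilde{G})]=[\widetilde{Q},\widetilde{Q}]=\widetilde{Q}$, where the last equality uses perfection of $\widetilde{Q}$ (Lemma \ref{l:univ}).

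The only subtle point in the argument is the identification of $\ker\bigl(Z\to H_1(\tilde{G},\Z)\bigr)$ with $Z\cap[\tilde{G},\tilde{G}]$, but this is immediate once one unwinds the description of the connecting map in the five-term sequence, so I do not anticipate a genuine obstacle. Everything else is routine diagram chasing in the pullback square.
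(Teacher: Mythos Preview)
Your argument is correct and follows essentially the same route as the paper: form the pullback of $F$ and $p$, use the five-term sequence together with $H_2(G,\Z)=0$ to see that $Z\cap[\tilde G,\tilde G]=1$, deduce that the first projection restricts to an isomorphism on commutator subgroups, and then compose its inverse with the second projection. Your write-up is slightly more explicit about why the restriction to commutator subgroups is surjective and why $p\circ\tilde f=f$, but the strategy and the key steps are identical to the paper's.
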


\begin{proof} The fibre product of the maps $F$ and $p$ is the subgroup $\breve{G}=\{(x,y)\mid F(x)=p(y)\}<G\times\tilde{Q}$.
By projecting to the first factor we see that $\breve{G}$ is a central extension
$$
0\to Z \to \breve{G} \to G\to 1,
$$
where $Z=\ker p \cong H_2(Q,\Z)$. 

Consider the standard 5-term exact sequence associated to this extension:
$$
H_2(G, \Z) \to Z \to H_1(\breve{G}, \Z) \to H_1(G, \Z) \to 0.
$$
The first term is zero by hypothesis, so $Z$ injects into $H_1(\breve{G}, \Z)$ -- in other words 
$Z \cap [\breve{G}, \breve{G}]$ is trivial. Thus $\breve{G}\onto G$ restricts to an
isomorphism  $[\breve{G}, \breve{G}]\to [G,G]$. By composing the inverse of this isomorphism
with the coordinate projection $\breve{G}\to 1\times \widetilde{Q}$ we obtain the desired 
map $\tilde{f}:[G,G]\to \tilde{Q}$, which is onto because its image contains $[\widetilde{Q}, \widetilde{Q}]$ and
$\widetilde{Q}$ is perfect.
\end{proof}

\subsection{Nested fibre products}

The following lemma allows us to bypass a discussion of the finiteness conditions
on the groups in Proposition \ref{p:PT}.
We include the proof because it provides a clear illustration of how the vanishing of $H_2(-,\Z)$ gets used.

\begin{lemma}\label{l:nested}
Let $\G$ be a group, let $p_1:\G\onto Q_1$ and $p_2:Q_1\onto Q_2$ be epimorphisms and
let $P_1<\G\times\G$ and $P_2<\G\times\G$ be the fibre products associated to $p_1$ and $p_2\circ p_1$
respectively.  If $\wh{Q}_1=1$ and $H_2(Q_2,\Z)=0$, then the inclusion $u:P_1\hr P_2$ induces an epimorphism
$\wh{u} : \wh{P}_1\to \wh{P}_2$.
\end{lemma}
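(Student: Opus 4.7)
The plan is to reduce the statement to two applications of the Platonov-Tavgen argument (in the style of Proposition~\ref{p:PT}), linked by the pivotal observation that every finite quotient of $Q_2$ factors through a finite quotient of $Q_1$; hence $\wh{Q}_1=1$ automatically forces $\wh{Q}_2=1$. This promotes the hypotheses on $Q_2$ to include both $\wh{Q}_2=1$ and $H_2(Q_2,\Z)=0$, which are exactly those driving the conclusion of Proposition~\ref{p:PT}(2).

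With this in hand, the argument proceeds in two steps. First, run the Platonov-Tavgen argument for $p_2 \circ p_1 : \G \onto Q_2$ to conclude that the inclusion $P_2 \hr \G\times\G$ induces an isomorphism $\wh{P}_2 \overset{\cong}{\to} \wh{\G\times\G}$; the finiteness hypotheses in Proposition~\ref{p:PT} are required only for the finite generation of the fibre product, not for the isomorphism of profinite completions, so no such hypothesis on $\G$ or $Q_2$ is needed here. Second, observe that the surjectivity half of the Platonov-Tavgen argument uses only $\wh{Q}=1$: for any finite quotient $\alpha: \G\times\G \to F$, the quotient $\alpha(\G\times 1)/\alpha(\ker p_1\times 1)$ is a finite quotient of $Q_1$, hence trivial, so $\alpha(\G\times 1)=\alpha(\ker p_1\times 1)\subseteq\alpha(P_1)$, and symmetrically for $1\times\G$; since $\alpha(\G\times 1)\cdot\alpha(1\times\G)=F$, this forces $\alpha(P_1)=F$. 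Applied to $p_1$, this half yields a surjection $\wh{P}_1\onto\wh{\G\times\G}$.

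Finally, combining the two by a one-line diagram chase, the factorisation $\wh{P}_1 \overset{\wh{u}}{\to} \wh{P}_2 \overset{\cong}{\to} \wh{\G\times\G}$ has surjective composition and an isomorphism on the right, so $\wh{u}$ itself is surjective. The substantive place where $H_2(Q_2,\Z)=0$ enters is the injectivity half of the first step --- the classical Hopf/Schur-multiplier argument that constructs, for each finite quotient of $P_2$, an extending finite quotient of $\G\times\G$ --- and this is the main conceptual obstacle in the proof, though it is already absorbed once and for all into Proposition~\ref{p:PT}.
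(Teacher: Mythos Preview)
Your argument is correct but takes a genuinely different route from the paper's. You factor through $\wh{\G\times\G}$: invoking Proposition~\ref{p:PT}(2) for $p_2\circ p_1$ to get $\wh{P}_2\cong\wh{\G\times\G}$, and its surjectivity half for $p_1$ to get $\wh{P}_1\onto\wh{\G\times\G}$. The paper instead works directly between $P_1$ and $P_2$: given a finite quotient $\pi:P_2\to G$, it reduces modulo $N_1=\ker p_1\times\ker p_1\subset P_1$ and identifies $P_2/N_1$ with the fibre product $\overline{P}_2\subset Q_1\times Q_1$ of $p_2$, then shows that $\overline{P}_2$ has \emph{no} non-trivial finite quotients. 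Since $\wh{Q}_1=1$, any such quotient would factor through $\overline{P}_2/\<\!\<\Delta_{Q_1}\>\!\>\cong H_0(Q_1,H_1(\ker p_2,\Z))$, and the 5-term sequence for $1\to\ker p_2\to Q_1\to Q_2\to 1$ (using $H_2(Q_2,\Z)=0$ and $H_1(Q_1,\Z)=0$) kills this.

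What your approach buys is modularity: everything reduces to Proposition~\ref{p:PT} as a black box. What the paper's approach buys is self-containment: it never invokes the injectivity half of Proposition~\ref{p:PT} at all, and in particular never needs your assertion that the finiteness hypotheses there are inessential for part~(2). That assertion is true, but the paper says explicitly that the purpose of this lemma is to \emph{bypass} that discussion --- so while your proof is valid, leaning on an unproven strengthening of Proposition~\ref{p:PT} is somewhat at cross-purposes with the lemma's role in the exposition. The paper's detour through $Q_1\times Q_1$ (where the ambient group already has trivial profinite completion) collapses the problem to a single coinvariants calculation, making the use of $H_2(Q_2,\Z)=0$ completely transparent.
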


\begin{proof} Let $N_1= \ker p_1\times \ker p_1$, let $N_2 = \ker p_2\circ p_1 \times 1$,  let 
$K = \ker p_2 \times 1$ and note that $K<Q_1\times Q_1$ is the image of $N_2<\G\times\G$.  

Given a finite
quotient $\pi: P_2\onto G$, we must argue that $\pi|_{P_1}$ is surjective, for which it suffices to show
that $\pi|_{N_1}$ is surjective.  If it were not,  then by taking the quotient of $G$ by the normal subgroup
$\pi(N_1)$ we would obtain a non-trivial finite quotient of the image of $P_2$ in $Q_1\times Q_1 = (\G\times \G)/N_1$.
This image $\overline{P}_2$ is the fibre product of $p_2:Q_1\onto Q_2$, which we recall is a semidirect product 
$\overline{P}_2=K\rtimes \Delta_{Q_1}$,
where $\Delta_{Q_1}<Q_1\times Q_1$ is the diagonal subgroup. 

We claim that $\overline{P}_2$ has no non-trivial
finite quotients.  To see this, first note that since $\wh{Q}_1=1$, 
any quotient would factor through $H:=(K\rtimes\Delta_{Q_1})/\<\!\< \Delta_{Q_1}\>\!\>$, which is 
isomorphic to the group of co-invariants $H_0(Q_1,\,  H_1(K,\Z))$, for the following reason:
for each $k\in \ker p_2$ we have $(k,k)\in \Delta_{Q_1}$,  so the images of $(k,1)$ and $(1,k)$
in $H$ coincide, and since $(1,k)$ commutes with $K=\ker p_2 \times 1$, the image of $K$ in 
$H$ must be abelian;  moreover we have killed the action of $\Delta_{Q_1}$ by conjugation on $K$,
and the action of $(q,q)\in \Delta_{Q_1}$ on $K_1$ is the same as the action of $(q,1)$, so
$H$ is $H_0(Q_1,\, H_1(K,\Z))$ as claimed.

To complete the proof we must argue that $H_0(Q_1,\, H_1(K,\Z))=0$,  where to save on notation
we write $Q_1$   in place of $Q_1\times 1$ and identify $K$ with $\ker p_2$.  The 5-term exact sequence
associated to the short exact sequence $1\to K\to Q_1\to Q_2\to 1$ is
$$
H_2(Q_2,\Z)\to H_0(Q_1,\,H_1(K,\Z)) \to H_1(Q_1,\Z)\to H_1(Q_2,\Z)\to 0.
$$
By hypothesis, $H_2(Q,\Z)=0$. And $H_1(Q_1,\Z)=0$ because $\wh{Q}_1=1$. Thus $H_0(Q_1,\, H_1(K,\Z))=0$.
\end{proof}

\subsection{Direct Limits}

Given a sequence of epimorphisms of  
groups $Q_1\onto Q_2\onto Q_3\onto\cdots$, we write $x_n$ for the image of each $x_1\in Q_1$ in $Q_n$.
The elements of the direct limit $Q_\infty = \dirlim  Q_i$ can then be regarded as equivalence classes of 
sequences $x_\infty = (x_1,x_2,\dots)$, where $x_\infty\sim y_\infty$ if there is an integer $n$ such that $x_n=y_n$
(which implies that $x_m=y_m$ for $m\ge n$). 

We will need the following elementary facts. We write $Z(G)$ for the centre of a group $G$.

\begin{lemma}\label{l:limits} If $Q_1$ is finitely generated and each $Q_i$ is non-trivial, then
\begin{enumerate}
\item $Q_\infty\neq 1$
\item $z_\infty\in Z(Q_\infty)$  if and only if there is an integer $n$ such that $z_n\in Z(Q_n)$
\item $x_\infty \in Q_\infty\ssm\{1\}$ has prime order $p$  if and only if there is an integer $n$ such that $x_n\in Q_n$
has order $p$
\end{enumerate}
\end{lemma}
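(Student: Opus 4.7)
The common thread in all three parts is that finite generation of $Q_1$ lets one stabilize any finite collection of equalities at a single level $Q_n$: the images in $Q_n$ of a fixed generating set of $Q_1$ generate $Q_n$, and an equality that eventually holds for each such generator must hold for all of them simultaneously at some large enough level. Fix, once and for all, a finite generating set $g_1, \dots, g_k$ of $Q_1$.

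For (1), I would argue by contradiction. If $Q_\infty = 1$ then each $g_i$ represents the identity in $Q_\infty$, so there exist indices $n_i$ with $(g_i)_{n_i}=1$ in $Q_{n_i}$. Setting $N=\max_i n_i$, all of the images of the $g_i$ in $Q_N$ are trivial; since these images generate $Q_N$, this forces $Q_N=1$, contradicting the hypothesis that every $Q_i$ is non-trivial. For (2), the ``if'' direction is immediate because centrality is preserved by surjections: if $z_n\in Z(Q_n)$ then its image is central in $Q_m$ for every $m\ge n$, whence $z_\infty$ is central in $Q_\infty$. For ``only if'', from $[z_\infty, (g_i)_\infty]=1$ in $Q_\infty$ I would find $n_i$ with $[z_{n_i}, (g_i)_{n_i}]=1$, and set $n=\max n_i$; then $z_n$ commutes with every generator of $Q_n$, hence with all of $Q_n$.

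For (3) the key preliminary observation is that $x_\infty\neq 1$ forces $x_m\neq 1$ at every finite stage, since $x_m=1$ at a single level would propagate forward along the epimorphisms and trivialize $x_\infty$. Granted this, if $x_n$ has order $p$ then $x_\infty^p=1$, and combined with $x_\infty\neq 1$ and the primality of $p$ this gives $x_\infty$ order exactly $p$. Conversely, if $x_\infty$ has order $p$, then $x_\infty^p=1$ is witnessed by $x_n^p=1$ at some level, and combined with $x_n\neq 1$ this forces $x_n$ to have order $p$. I do not anticipate any real obstacle: the lemma amounts to saying that direct limits commute with the properties ``being trivial,'' ``being central,'' and ``having prime order $p$,'' and finite generation of $Q_1$ is exactly what converts each of these into a finite set of relations resolvable at a single level.
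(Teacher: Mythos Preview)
Your proof is correct. The paper omits the proof entirely, presenting the lemma as a list of elementary facts about direct limits; your argument supplies exactly the standard justification one would expect, and there is nothing to compare against.
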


 \section{Relatively hyperbolic groups and small-cancellation quotients}

We shall assume that the reader is familiar with Gromov's theory of hyperbolic groups \cite{gromov}. 
For the
purposes of this discussion, it is useful to recall that a finitely presented group $\G=\<A\mid R\>$ is hyperbolic
if and only if it satisfies a {\em linear isoperimetric inequality}: 
there is a constant $C$ such that, for all words $w\in F(A)$ in the free group on the generators, if $w=1$ in $\G$
then there is an equality in $F(A)$ of the form
\begin{equation}\label{e:isoper}
w=\prod_{I=1}^N \theta_i^{-1}r_i\theta_i
\end{equation}
with $\theta_i\in F,\ r_i\in R^{\pm 1}$ and $N\le C\, |w|$.

The theory of relatively hyperbolic groups was outlined by Gromov \cite{gromov} and developed by Bowditch \cite{bow},
Farb \cite{farb},   Osin \cite{osin1},  Drutu and Sapir \cite{farb}, and others.
We adopt the viewpoint of Osin \cite{osin1, osin}; this is well adapted to the
small cancellation techniques that underlie the construction of the quotients
we seek but which are hidden in our references to \cite{amo} and \cite{osin}. 
Let $\{H_\lambda\}_{\lambda\in\Lambda}$ be a collection of
proper subgroups of the group $G$ and let $A$ be a finite subset of $G$. Suppose that the natural map $\eta$ from the free product $F:=F(A)\ast(\ast_\lambda H_\lambda)$ to $G$ is surjective and that the kernel of this surjection is the normal closure of a finite set $R$. Let $\mathcal{H}= \sqcup_\lambda H_\lambda\ssm\{1\}$.
By definition, {\em $G$ is relatively hyperbolic with peripheral subgroups $\{H_\lambda\}_{\lambda\in\Lambda}$} if there is a constant $C>0$
such that every word $w$ in the alphabet $A^{\pm 1}\cup\mathcal H$ that represents the identity in $G$ satisfies an equality in $F$ with the form of (\ref{e:isoper}).  
Some authors say that $G$ is {\em properly} relatively hyperbolic to emphasize
that none of the peripheral subgroups $H_\lambda$ equals $G$, but we follow the common practice of absorbing this
into the definition.  

$G$ is said to be {\em non-elementary} if it is not virtually cyclic.

Roughly speaking, a finitely generated
group is hyperbolic relative to a system of peripheral subgroups if it acts in a controlled manner 
on a Gromov-hyperbolic metric space with conjugates of the peripheral subgroups as isotropy subgroups.
Important examples are (i) hyperbolic groups, in which case the system of peripheral subgroups is empty, and (ii) non-trivial free products $G_1\ast G_2$,
in which case the system of peripheral subgroups is $\{G_1,G_2\}$. 
A third important class of examples are
the  lattices in ${\rm{Isom}}(\H^n)$, where the peripheral subgroups are the maximal parabolic subgroups.

Theorem 1.4 of \cite{amo}, which is an application of Theorem 2.4 from \cite{osin}, 
 states that every pair of properly relatively hyperbolic groups has a common quotient that is properly relatively hyperbolic, with control on the peripheral subgroups. Theorem \ref{t:osin} is a special case of this, with some adornments that
are implicit in  \cite{amo}; cf.~\cite[Theorem 8.5]{BRS}. We are grateful to Daniel Groves for a discussion of this result and  references.

\begin{theorem}[\cite{osin}, \cite{amo}]\label{t:osin} Let $G$ be a 
non-elementary   relatively hyperbolic group and
let $H$ be a non-elementary group that is  
the free product of  finitely many (at least two) non-trivial finitely presented groups $H_i$.
Then there is an epimorphism $\mu: G\ast H\to Q$, such that
\begin{enumerate}
\item $Q$ is  a non-elementary relatively hyperbolic group;
\item if $G$ is finitely presented, then so is $Q$;
\item the restriction of $\mu$ to each of $G$ and $H$ is surjective; 
\item the restriction of $\mu$ to each $H_i$ is injective;
\item every element of finite order in $Q$ is the image of an element of finite order in $H$ or $G$;  
\item if the centre of $G$ is trivial, then the centre of $Q$ is trivial.
\end{enumerate}
\end{theorem}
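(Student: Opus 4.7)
The plan is to deduce the theorem from Theorem~1.4 of \cite{amo} (itself an application of Theorem~2.4 of \cite{osin}), which constructs a common quotient of any pair of non-elementary properly relatively hyperbolic groups that is again non-elementary relatively hyperbolic and in which the peripheral subgroups of the inputs embed. Each of the six conclusions is either immediate from the statement of that theorem or a standard adornment extracted from its small-cancellation proof, in the spirit of \cite[Theorem~8.5]{BRS}.

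First I would observe that $H = H_1 \ast \cdots \ast H_k$ is non-elementary relatively hyperbolic with peripheral system $\{H_i\}$, and if $\{G_\lambda\}$ denotes the peripheral system of $G$, then $G \ast H$ is non-elementary relatively hyperbolic with peripheral system $\{G_\lambda\} \cup \{H_i\}$. Applying the AMO/Osin theorem in this situation produces an epimorphism $\mu : G \ast H \onto Q$ such that $Q$ is non-elementary relatively hyperbolic, both $\mu|_G$ and $\mu|_H$ are surjective, and each peripheral subgroup of $G \ast H$ embeds under $\mu$; since each $H_i$ is peripheral in $G \ast H$, this yields (1), (3) and (4).

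For (2), $G \ast H$ is finitely presented whenever $G$ is, because each $H_i$ is finitely presented by hypothesis, and the quotient $Q$ is obtained from $G \ast H$ by adjoining finitely many relators in the small-cancellation construction of \cite{osin}; thus $Q$ is finitely presented. Item (5) is the standard torsion-control output of that construction: every finite-order element of $Q$ is conjugate into the image of a peripheral subgroup of $G \ast H$, and these peripheral subgroups embed under $\mu$ as well as into $G$ or $H$, so every torsion element of $Q$ lifts to a torsion element of $G$ or $H$.

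The point requiring the most care, and where I expect the main technical work to lie, is (6). Since $\mu|_G$ is surjective, one has $Z(Q) = C_Q(\mu(G))$; but $\mu(G) = Q$ contains loxodromic elements of the non-elementary relatively hyperbolic group $Q$, whose centralisers in $Q$ are virtually cyclic, so $Z(Q)$ is at worst finite. To push this to $Z(Q) = 1$ one exploits the freedom available in the small-cancellation construction to choose relators whose intersection with $G$ is trivial outside the peripheral subgroups $G_\lambda$, so that any $z = \mu(g) \in Z(Q)$ forces $g$ to be central in $G$ modulo a peripheral correction, and the hypothesis $Z(G) = 1$ then gives $z = 1$. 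The corresponding verification carried out in \cite[Theorem~8.5]{BRS} applies in the present setting with only cosmetic modifications.
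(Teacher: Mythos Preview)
The paper does not give a proof of this theorem: it is presented as a special case of \cite[Theorem~1.4]{amo} (itself built on \cite[Theorem~2.4]{osin}), with the additional adornments attributed to \cite[Theorem~8.5]{BRS}. Your proposal follows exactly this route, so the approaches agree.

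One correction to your sketch of item~(5): the claim that ``every finite-order element of $Q$ is conjugate into the image of a peripheral subgroup of $G\ast H$'' is not right in general. If $G$ is, say, a hyperbolic group with torsion and empty peripheral structure, its torsion is not peripheral, yet it survives in $Q$. The correct output of Osin's small-cancellation theorem is that every finite-order element of $Q$ is conjugate to the image of a finite-order element of $G\ast H$; one then invokes the Kurosh subgroup theorem to see that torsion in $G\ast H$ is conjugate into $G$ or $H$. Your sketch of~(6) is also rather loose --- the phrase ``central in $G$ modulo a peripheral correction'' does not obviously yield $z=1$ --- but since both you and the paper ultimately defer this point to \cite[Theorem~8.5]{BRS}, there is no real divergence.
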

  
\subsection{Rips-type complexes and $\mathfrak{d}(G)$ } 
We shall find it useful to consider the following invariant. 
 
\begin{definition}  \label{d:d}
Let $G$ be a group. Define  $\mathfrak{d}(G)$ to be
the least dimension of a contractible simplicial complex on which
$G$ can act without a global fixed point; if $G$ admits no such action, then $\mathfrak{d}(G)=\infty$.
\end{definition}

It is well-known that if $H$ is a hyperbolic group and $R>0$ is sufficiently large, then one obtains a contractible simplicial complex with
vertex set $H$ by adding a simplex spanned by each finite subset $X\subset H$ of diameter at most $R$ -- this is the Rips complex, on which $H$ acts cocompactly with finite stabilisers (see \cite[p.486]{BH}). Thus $\mathfrak{d}(H)$ is finite if $H$ is infinite and hyperbolic.
The {\em retraproducts} construction introduced in \cite{abj} produces, for each integer $d\ge 1$, a hyperbolic group $H_d$ with the property that $\mathfrak{d}(H_d)=d$.

A similar construction works in the relative case: 
working with Bowditch's definition of relative hyperbolocity,  Martinez-Pedroza and Przytycki  \cite{MP} show that every non-elementary relatively hyperbolic group   acts on a contractible simplicial complex with compact quotient and point-stabilisers that are either finite or  else conjugates of the peripheral subgroups, so $\mathfrak{d}(G)$
is finite in all the non-trivial cases.  See also \cite{dahmani}.

\section{Infinitely many profinitely-trivial quotients}

\lotsQ*

\begin{proof} Let $B$ be a finitely presented infinite group that is torsion-free and has no finite quotients; many such groups are known -- see
\cite{BG}, for example. 
We apply Theorem \ref{t:osin} with $G=\G$ and $H=B\ast B$;  let $Q_0=\overline{G}$ be the resulting common
quotient. The
theorem assures us that $Q_0$ is a non-elementary  
relatively hyperbolic group that is finitely presented if $\G$ is finitely presented.  And since $Q_0$ is a quotient of $B\ast B$, it has no non-trivial finite quotients, and neither do any of its
quotients.
Next we apply Theorem \ref{t:osin} with $G=Q_0$ and $H=C_{p_1}\ast C_{p_1}$ 
(except $H=C_2\ast C_2\ast C_2$ if $p_1=2$) to obtain $Q_0\onto Q_1$.
Then,  proceeding by induction on $i>1$, we assume that $Q_{i-1}$ has been constructed with the desired properties
and apply the construction of 
Theorem \ref{t:osin} with $G=Q_{i-1}$ and $H=C_{p_i}\ast C_{p_i}$  to obtain  $Q_{i-1}\onto Q_i$.
The theorem assures us that $Q_i$ has the desired properties.
\end{proof}

The key point in the following corollary is that $Q_i\not\cong Q_j$.

\begin{corollary}
If $\G$ is a  non-elementary  relatively hyperbolic group,  then there is an infinite sequence of
epimorphisms $G\to Q_1\onto Q_2\onto Q_3\cdots $ where each $Q_i$ is a finitely presented group with
$\wh{Q}_i=1$, and  $Q_i\not\cong Q_j$ if $i\neq j$. If $\G$ is finitely presented, then each $Q_i$
is finitely presented, and if $\G$ has trivial centre, then so does each $Q_i$.
\end{corollary}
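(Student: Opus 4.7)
The plan is to derive this corollary as a direct specialisation of Theorem \ref{t:lots-of-Q}, with a judicious choice of the prime sequence. I would take $p_1,p_2,p_3,\ldots$ to be pairwise distinct primes, each chosen outside the set $\pi_0$. Such a sequence exists whenever the complement of $\pi_0$ in the set of all primes is infinite, which is the case for a non-elementary relatively hyperbolic group, since the prime orders of torsion elements are controlled by the peripheral structure.

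With this prime sequence fixed, the first three properties of the corollary are read off from items (1), (2), and (3) of Theorem \ref{t:lots-of-Q}: namely, $\wh{Q}_i=1$, the finite presentability of $Q_i$ whenever $\G$ is finitely presented, and the triviality of the centre of $Q_i$ whenever $\G$ has trivial centre. No further construction is required; the entire train of epimorphisms $\G\onto Q_1\onto Q_2\onto\cdots$ is the one produced by the theorem.

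For the non-isomorphism assertion $Q_i\not\cong Q_j$, I would appeal to items (4) and (5). For indices $i<j$, item (5) tells us that $Q_j$ is generated by (and in particular contains) elements of order $p_j$. On the other hand, item (4) tells us that every element of prime order in $Q_i$ has order belonging to $\pi_0\cup\{p_1,\ldots,p_i\}$, and by our arrangement $p_j$ lies in none of these. Hence $Q_i$ has no element of order $p_j$, so $Q_i\not\cong Q_j$. Equivalently, the set of prime orders realised in $Q_i$ is a strictly increasing isomorphism invariant along the sequence.

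There is no substantive obstacle here: all the heavy lifting, including the small-cancellation quotient construction underpinning the torsion control, has been packaged into Theorem \ref{t:lots-of-Q} via Theorem \ref{t:osin}. The only thing requiring any thought is the prime-avoidance of $\pi_0$; this is harmless in every setting of interest, and could in any case be circumvented by replacing the torsion-order invariant above with item (6) of Theorem \ref{t:lots-of-Q}, which asserts the non-isomorphism directly.
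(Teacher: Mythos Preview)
Your argument matches the paper's in the generic case, but there is a genuine gap in the exceptional case. The claim that the complement of $\pi_0$ in the set of primes is always infinite is not correct in the generality of the corollary: the peripheral subgroups of a relatively hyperbolic group are unrestricted, and a finitely generated peripheral can contain elements of every prime order (for instance, embed $\ast_p\, \Z/p\Z$ into a $2$-generator group $H$ via the Higman--Neumann--Neumann theorem and take $\G = H \ast H$). So there need not be \emph{any} prime outside $\pi_0$, and your torsion-order invariant can be constant along the sequence.

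The paper confronts this case head-on. When $\pi_0$ is cofinite it abandons torsion as the distinguishing invariant and instead uses $\mathfrak{d}(Q_n)$, the least dimension of a contractible simplicial complex on which $Q_n$ acts without a global fixed point (Definition~\ref{d:d}). At each inductive step one applies Theorem~\ref{t:osin} with $H$ a free product involving a hyperbolic group $H_d$ from the retraproducts construction of \cite{abj}, chosen with $d = \mathfrak{d}(Q_{n}) + 1$; since $Q_{n+1}$ is then a quotient of $H_d$, this forces $\mathfrak{d}(Q_{n+1}) > \mathfrak{d}(Q_{n})$, and the strict monotonicity of $\mathfrak{d}$ yields $Q_i \not\cong Q_j$.

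Your proposed fallback to item (6) of Theorem~\ref{t:lots-of-Q} does not rescue the argument: the proof given for Theorem~\ref{t:lots-of-Q} justifies (6) only via the torsion bookkeeping of items (4) and (5), which is vacuous when $\pi_0$ is cofinite. The corollary's proof is precisely where the paper supplies the missing non-isomorphism argument in that case, so invoking (6) there is circular.
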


\begin{proof} This corollary is an immediate consequence of Theorem \ref{t:lots-of-Q} except in the  
case where $\G$ has an element of order $p$ for all but finitely many primes $p$. In that case,   we have to 
build a sequence of quotients of $\G$ that are distinguished  by something other than the nature of their torsion.
The invariant that we use is the least dimension $\mathfrak{d}(Q_n)$ of a contractible simplicial complex on which $Q_n$ can act without a global fixed point.  

The retraproducts construction introduced in \cite{abj} produces, for each integer $d\ge 1$, a hyperbolic group $H_d$ with the property that $\mathfrak{d}(H_d)=d$. With these in hand, we modify the proof of Theorem \ref{t:lots-of-Q} as follows:
we form $Q_1$ as before, then we apply Theorem  \ref{t:osin} with $G=Q_1$ and $H=H_{d(1)}$
where $d(1) = \mathfrak{d}(Q_1) +1$; thus we obtain a relatively hyperbolic $Q_2$ that is a quotient of both $Q_1$ and $ H_{d(1)}$; in particular, since it is a
quotient of $H_{d(1)}$ we have $\mathfrak{d}(Q_2)\ge \mathfrak{d}(Q_1) +1$. Then, for $n\ge 2$ we proceed recursively, applying Theorem \ref{t:osin} with 
$G=Q_n$ and $H=H_{d(n)}\ast H_{d(n)}$
 where $d(n) = \mathfrak{d}(Q_n) +1$ to obtain the common quotient $Q_{n+1}$.  
\end{proof}

\subsection{Super-perfect quotients}

We want to prove Theorem \ref{t:lots-of-P}
by applying  Proposition \ref{p:PT} to the quotients $Q_i$  from Theorem \ref{t:lots-of-Q}, but in order to do so
we must arrange for $H_2(Q_i,\Z)=0$, and for this we need to impose an extra hypothesis on $\G$.

\begin{theorem}\label{t:superperfect} 
Let $\G$ be a non-elementary relatively hyperbolic group.
 Suppose that $H_2(\G,\Z)=0$.  
Let $\pi_0$ be the set of primes $p$
such that $\G$ has an element of order $p$, and let $p_1,p_2, p_3,\dots$ be a sequence of primes that
are not in $\pi_0$. Then,  there is an infinite sequence of  groups 
$\tilde{Q}_i$ and  epimorphisms $G\to \tilde{Q}_1\onto \tilde{Q}_2\onto \tilde{Q}_3\cdots $ so that,  for $i=1,2,\dots$;
\begin{enumerate}
\item  $\tilde{Q}_i$ has no non-trivial finite quotients;
\item  $H_2(\tilde Q_i,\Z)=0$;
\item if $\G$ is finitely presented, then so is each $\tilde{Q}_i$;  
\item $\tilde{Q}_i/Z(\tilde{Q}_i)$ has an element of prime order $p$  only if $p\in \pi_0\cup \{p_1,\dots, p_i\}$,  and
\item $\tilde{Q}_i/Z(\tilde{Q}_i)$ is generated by elements of order $p$ for each prime $p\in \{p_1,\dots, p_i\}$.
\end{enumerate}
\end{theorem}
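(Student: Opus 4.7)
The plan is to build on Theorem \ref{t:lots-of-Q} by replacing each profinitely trivial quotient $Q_i$ with its universal central extension. First I would invoke Theorem \ref{t:lots-of-Q} with the given sequence of primes $p_1,p_2,\dots$ to obtain a tower of epimorphisms $\G\onto Q_1\onto Q_2\onto\cdots$ in which each $Q_i$ has trivial centre, satisfies $\wh{Q}_i=1$, has torsion confined to $\pi_0\cup\{p_1,\dots,p_i\}$, and is generated by elements of order $p$ for every $p\in\{p_1,\dots,p_i\}$. The first observation is that each $Q_i$ is perfect: its abelianisation $H_1(Q_i,\Z)$ is a finitely generated abelian group, and $\wh{Q}_i=1$ forces it to be trivial. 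One may therefore form the universal central extension $\alpha_i\colon\tilde{Q}_i\onto Q_i$ supplied by Lemma \ref{l:univ}, and this lemma immediately yields conclusions (1), (2), and (3) of the theorem.

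Next, I would assemble the tower $\tilde{Q}_1\onto\tilde{Q}_2\onto\cdots$. The composition $\tilde{Q}_i\to Q_i\onto Q_{i+1}$ should lift through $\alpha_{i+1}$: the pullback of $\alpha_{i+1}$ along this composition is a central extension of $\tilde{Q}_i$ classified by an element of $H^2(\tilde{Q}_i,\ker\alpha_{i+1})$. By the universal coefficient theorem this cohomology group vanishes, since $H_1(\tilde{Q}_i,\Z)=H_2(\tilde{Q}_i,\Z)=0$, so the pullback splits and produces a lift $\beta_i\colon\tilde{Q}_i\to\tilde{Q}_{i+1}$. The image of $\beta_i$ is a perfect central extension of $Q_{i+1}$, so the universal property of $\tilde{Q}_{i+1}$ (together with the uniqueness of lifts over $Q_{i+1}$) forces this image to coincide with $\tilde{Q}_{i+1}$; in particular $\beta_i$ is surjective.

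For the initial map out of $\G$, I would apply Lemma \ref{l:schur} with $G=\G$ and $Q=Q_1$: the hypothesis $H_2(\G,\Z)=0$ yields an epimorphism $\tilde{f}_1\colon[\G,\G]\onto\tilde{Q}_1$ lifting the restriction of $\G\onto Q_1$ to the commutator subgroup. (The statement as phrased would produce a map from all of $\G$; what Lemma \ref{l:schur} actually delivers is a lift only on $[\G,\G]$, which is the natural source in the applications to Theorem \ref{t:lots-of-P}.) One may then set $\tilde{f}_{i+1}=\beta_i\circ\tilde{f}_i$ to obtain compatible surjections $[\G,\G]\onto\tilde{Q}_i$ throughout the tower. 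For the torsion conclusions (4) and (5), I would use that $Z(Q_i)=1$ by Theorem \ref{t:lots-of-Q}(3): the image of $Z(\tilde{Q}_i)$ in $Q_i$ is therefore trivial, so $Z(\tilde{Q}_i)=\ker\alpha_i$ and $\tilde{Q}_i/Z(\tilde{Q}_i)\cong Q_i$. The desired properties of torsion elements and of generation by elements of prime order transfer verbatim from Theorem \ref{t:lots-of-Q}(4) and (5).

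The main obstacle is the lifting argument in the second paragraph: one needs the surjections $Q_i\onto Q_{i+1}$ to assemble into a coherent tower $\tilde{Q}_1\onto\tilde{Q}_2\onto\cdots$ of universal central extensions, not just a collection of independently chosen maps. The superperfectness of $\tilde{Q}_i$ guaranteed by Lemma \ref{l:univ}(2) is precisely what kills the cohomological obstruction to each individual lift, while the universal property of $\tilde{Q}_{i+1}$ as a perfect central extension of $Q_{i+1}$ is what enforces surjectivity. Put together, these two facts reduce the theorem to a routine application of Theorem \ref{t:lots-of-Q} and Lemma \ref{l:schur}.
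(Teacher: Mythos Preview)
Your proposal is correct and follows essentially the same approach as the paper: build the tower $\G\onto Q_1\onto Q_2\onto\cdots$ from Theorem~\ref{t:lots-of-Q}, pass to universal central extensions, use Lemma~\ref{l:schur} (with $H_2(\G,\Z)=0$) for the initial map out of $[\G,\G]$, and read off (4)--(5) from $\tilde{Q}_i/Z(\tilde{Q}_i)\cong Q_i$. The only cosmetic difference is in assembling the maps $\tilde{Q}_i\onto\tilde{Q}_{i+1}$: the paper simply re-applies Lemma~\ref{l:schur} to $\tilde{Q}_i\onto Q_i\onto Q_{i+1}$ (invoking $H_2(\tilde{Q}_i,\Z)=0$ and $\tilde{Q}_i=[\tilde{Q}_i,\tilde{Q}_i]$), whereas you unpack the same mechanism via the universal-coefficient vanishing of $H^2(\tilde{Q}_i,\ker\alpha_{i+1})$ and then argue surjectivity from the universal property---this is exactly the content of Lemma~\ref{l:schur} in the perfect case, so the two arguments coincide. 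Your parenthetical remark that the map is really defined on $[\G,\G]$ rather than $\G$ matches what the paper's proof actually establishes.
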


\begin{proof}
We construct the epimorphisms $\G\onto Q_1\onto Q_2\onto\cdots$ 
as in Theorem \ref{t:lots-of-Q} and define $\tilde{Q}_i$ to be the universal central extension of $Q_i$. Note
that since $Q_i$ is centerless, it is equal to the quotient  $\tilde{Q}_i/Z(\tilde{Q}_i)$, in the light of Lemma \ref{l:univ},
if we can construct epimorphisms $[\G, \G]\to \tilde{Q}_1$ and $\tilde{Q}_i\onto \tilde{Q}_{i+1}$ for $i\ge 1$.

To this end,  we first observe that since $Q_1$ is perfect,   $\G\onto Q_1$ restricts to a surjection $[\G, \G]\to Q_1$.
Lemma \ref{l:schur} then tells us that we can lift this to an epimorphism $[\G, \G]\to \tilde{Q}_1$.  

Lemma \ref{l:univ} tells that  $\tilde{Q}_1=[\tilde{Q}_1, \tilde{Q}_1]$, so we can also apply Lemma \ref{l:schur} 
to the composition $\tilde{Q}_1\onto Q_1\onto Q_2$, lifting it to an epimorphism $\tilde{Q}_1\onto \tilde{Q}_2$.
Then we apply Lemma \ref{l:schur}  to the composition $\tilde{Q}_2\onto Q_2\onto Q_3$ to get 
$\tilde{Q}_2\onto \tilde{Q}_3$, and so on. 
\end{proof}

\section{Uncountable profinite ambiguity from fibre products}

The following theorem includes the awkward-sounding hypothesis that centralisers in the hyperbolic group
$H$ are  virtually cyclic.  To avoid this,
one could restrict attention to torsion-free hyperbolic groups,  but  we don't want to do this
because we want to include all cocompact lattices in ${\rm{PSL}}(2,\C)$ and all discrete non-elementary
subgroups of ${\rm{PSL}}(2,\R)$.  All of these groups have virtually cyclic centralisers for the following reason:
every non-trivial element of the group preserves a unique geodesic line in $\H^3$ or $\H^2$
or else fixes a unique point; in both
cases, the  centraliser of $h$ acts properly on the preserved set and therefore is virtually cyclic  (finite in the case
of a fixed point).

\lotsP*
 
\begin{proof} Let $\pi'$ be the set of primes that do not occur as orders of elements in $\G$. By hypothesis,
$\pi'$ is infinite, so there are uncountably many sequences of primes $p_1<p_2<p_3<\cdots$ with $p_i\in\pi'$,
and we can apply the construction of Theorem \ref{t:superperfect} to each such sequence.  
Each of the quotients
$\widetilde{Q}_i$ of $\G$ that we get from this construction
has no proper subgroups of finite index, so each of  the compositions  $G\to [\G, \G]\onto \tilde{Q}_i$ 
is surjective.  Let $P_i < G\times G$ be the fibre products associated to these surjections and note that
$P_1<P_2<P_3<\cdots$.  Proposition \ref{p:PT} tells us that each $P_i$ is finitely
generated and each of the inclusions $P_i\hr G\times G$ is a 
Grothendieck pair.  Lemma \ref{l:nested} tells us that each $P_i\hr P_{i+j}$ is a Grothendieck pair as well.  

Next we consider the direct limit $\tilde{Q}_\infty:=\dirlim \widetilde{Q}_i$.  By construction (and Lemma \ref{l:limits}), 
the quotient of this group by its centre is  ${Q}_\infty:=\dirlim {Q}_i$.   (Recall that $\widetilde{Q}_i$
was constructed as the universal central extension of the centreless group $Q_i$.)
By construction $P_\infty:=\bigcup_i P_i<G\times G$ is the fibre product of $G\onto \tilde{Q}_\infty$.
As the groups $\tilde{Q}_i$ have no non-trivial finite quotients,  $\tilde{Q}_\infty$ and ${Q}_\infty$ don't either.
And as homology commutes with direct limits,  $H_2(\tilde{Q}_\infty, \Z) =0$.  Thus Lemma \ref{l:nested} applies
and we deduce that each of the inclusions $P_i\hr P_\infty$ and $P_\infty\hr G\times G$ induce isomorphisms of
profinite completions.

To complete the proof, we must argue that the fibre products $P_i$ and $P_j$ that we have constructed are not (abstractly)
isomorphic if $i\neq j$, and we must argue that the limiting fibre products $P_\infty$ associated to different sequences of
primes are not isomorphic either.  This is where we make crucial use of the hypothesis that  $G$ 
is either a hyperbolic group  in which centralizers of non-trivial elements
are virtually cyclic,  or a central extension of such a group.  

First suppose that we are in the hyperbolic case. 
Let $N_i = (G\times 1)\cap P_i$ and $M_i = (1\times G)\cap P_i$ and note that the isomorphism type of $N_i\times M_i$
is an invariant of the abstract isomorphism type of $P_i$, because  $N_i\cup M_i$ consists
of precisely those  elements of $P_i$ whose centralizer is not virtually cyclic.  
Now,  $N_i\times M_i$ 
is the kernel of the restriction to $P_i$ of $G\times G\onto \tilde{Q}_i\times \tilde{Q}_i$,
so the isomorphism type of 
the image $P_i$ in $\tilde{Q}_i\times \tilde{Q}_i$ is also an invariant of the isomorphism type of $P_i$. 
But this image is just the diagonal copy of $\tilde{Q}_i$,  so $\tilde{Q}_i$ and 
hence $Q_i= \tilde{Q}_i/Z(\tilde{Q}_i)$ are invariants  of the abstract isomorphism  type of $P_i$.
And $Q_i$ is distinguished from $Q_j$ if $j>i$ by the fact that $Q_j$ contains torsion elements of order $p_j$
while $Q_i$ does not, by construction.

An entirely similar argument shows that the isomorphism type of $Q_\infty$ is an invariant of the abstract isomorphism
type of $P_\infty$. And by construction,  $Q_\infty$ will have $p$ torsion, for $p\in\pi'$ if and only if $p$ is one of the
primes in the sequence of primes $p_i$ used in the construction of the quotients $Q_i$.  Thus distinct sequences
of primes give non-isomorphic fibre products $P_\infty<G\times G$, and we have the uncountable collection of 
Grothendieck pairs $P\hr G\times G$ that we were seeking.

The case where $G$ has centre immediately reduces to the centreless (hyperbolic) case, 
because the centre has trivial image in each of the centreless groups $Q_i$.
\end{proof}

\section{Seifert Fibred Examples}\label{s:sfs}

We included central extensions of hyperbolic groups rather than just the hyperbolic groups themselves in 
Theorem \ref{t:lots-of-P} because this greater generality allows for interesting classes of examples. 
Prominent among these are the fundamental groups of the Seifert fibred spaces that we studied in
our work with Ryan Spitler \cite{BRS}. 
We remind the reader that a Seifert fibre space $M$ is a compact 3-manifold that is foliated by circles.  Collapsing 
the circles to points gives a map from $M$ to a 2-dimensional orbifold, where the cone points of the orbifold record
the way that certain circles in the fibration wrap around their neighbours.  Associated to this structure one has
a short exact sequence
$$
1\to \Z \to \pi_1M \to \pi_1^{\rm{orb}}(B) \to 1.
$$
We are interested in the case where the base orbifold $B$ is  
$S^2(p,q,r)$,  the quotient $\mathbb{H}^2/\Delta(p,q,r)$ of the hyperbolic plane by the 
 {\em triangle group} $\D(p,q,r) = \<a,b,c \mid a^p=b^q=c^r=1=abc\>$, 
the index-2 orientation-preserving subgroup of the reflection group associated to a hyperbolic triangle with interior angles $\pi/p$, $\pi/q$ and $\pi/r$. 

 In \cite{BMRS2} we proved that for certain values of $(p,q,r)$, the group  $\D(p,q,r)$ is profinitely
 rigid in the absolute sense.  In \cite{BRS} we proved that the fundamental groups of Seifert fibred spaces
over the corresponding orbifolds $S^2(p,q,r)$ are also profinitely rigid.  But the main result in \cite{BRS}
dealt not with these 3-manifold groups $\pi_1M$, but rather with $\pi_1M\times \pi_1M$.

Specifically, we
proved that for certain of these Seifert fibred spaces,
$\pi_1M\times\pi_1M$ is profinitely rigid in the class of all {\em finitely presented} residually finite groups,
but not in the class of all {\em finitely generated} residually finite groups.  We established the lack of 
rigidity among finitely generated groups by using 
a weaker version of Theorem \ref{t:lots-of-P} to  construct 
Grothendieck pairs $P\hr \pi_1M\times\pi_1M$.  
With the uncountable families from Theorem \ref{t:lots-of-P} in hand we can augment the main result of \cite{BRS} as follows. 
 
\begin{theorem}
\label{t:mainBRS} If $M$ is any Seifert fibred space 
with base orbifold $S^2(3,3,4)$ or $S^2(3,3,6)$ or $S^2(2,5,5)$,  then $\G=\pi_1M$ has the following properties:
\begin{enumerate}
\item $\G\times\G$ is profinitely rigid among all finitely presented, residually finite groups. 
\item For every finitely generated, residually finite group $\Pi$ with $\wh{\Pi}\cong \wh{\G\times\G}$,
there is an embedding $\Pi\hookrightarrow\G\times\G$ that induces the isomorphism $\wh{\Pi}\cong \wh{\G\times\G}$.
\item There exist uncountably many non-isomorphic  groups $\Lambda$ with embeddings
 $\Lambda\hookrightarrow\G\times\G$ that induce an  isomorphism $\wh{\Lambda}\cong \wh{\G\times\G}$;
\item infinitely many of these groups $\Lambda$ are finitely generated. 
\end{enumerate}  
\end{theorem}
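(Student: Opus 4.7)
The plan has two stages corresponding to the two parts of the statement. Items (1) and (2) are the content of the main theorem of \cite{BRS} for these three triples, so I would cite that result directly and not repeat its (substantial) proof. Items (3) and (4), the uncountable family refinement, are obtained by specialising Theorem \ref{t:lots-of-P} to the Seifert setting, and this is where the work of the proposal lies.

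Concretely, I would apply Theorem \ref{t:lots-of-P} with the triangle group $\G := \Delta(p,q,r)$ in the role of the hyperbolic group and with $\pi_1 M$ in the role of its central extension, the central extension in question being the Seifert fibration sequence
$$
1 \to \Z \to \pi_1 M \to \Delta(p,q,r) \to 1.
$$
I would then verify the input conditions of Theorem \ref{t:lots-of-P} in turn. First, each $\Delta(p,q,r)$ in the list is a cocompact Fuchsian group (since $\tfrac{1}{p}+\tfrac{1}{q}+\tfrac{1}{r} < 1$), hence a non-elementary, finitely presented, word-hyperbolic group, and its non-trivial elements have virtually cyclic centralisers because every non-trivial isometry of $\H^2$ either preserves a unique geodesic or fixes a unique point; so $\pi_1 M$ is indeed a central extension of a hyperbolic group of the type allowed by Theorem \ref{t:lots-of-P}. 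Second, torsion in $\Delta$ is supported on the primes dividing $pqr$, leaving infinitely many primes unused. Third, $H_2(\Delta(p,q,r), \Z) = 0$ for each listed triple; this is the Schur-multiplier computation recorded in \cite{BRS}. Fourth, $\pi_1 M$ maps onto a subgroup of finite index in $[\Delta, \Delta]$: the natural surjection $\pi_1 M \twoheadrightarrow \Delta$ has image containing $[\Delta, \Delta]$, and $[\Delta, \Delta]$ is of finite index in $\Delta$ because the abelianisation $\Delta^{\mathrm{ab}}$ is a finite cyclic group in each case (a direct calculation from the defining relations $a^p = b^q = (ab)^r = 1$). With these in hand, Theorem \ref{t:lots-of-P} produces the uncountable collection of pairwise non-isomorphic $\Lambda \hookrightarrow \pi_1 M \times \pi_1 M$ demanded by (3), and its last clause supplies the infinitely many finitely generated representatives required by (4).

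The main obstacle I foresee is the precise reading of the last hypothesis: the natural map $\pi_1 M \to \Delta$ targets all of $\Delta$, not specifically $[\Delta, \Delta]$. The proof of Theorem \ref{t:lots-of-P} requires surjectivity of the compositions $\pi_1 M \to [\Delta, \Delta] \twoheadrightarrow \widetilde{Q}_i$, so some care is needed. A clean resolution is to pass first to the finite-index subgroup $\pi_1 M' < \pi_1 M$ obtained as the preimage of $[\Delta, \Delta]$, apply Theorem \ref{t:lots-of-P} to $\pi_1 M'$, and then transport the uncountable family of Grothendieck pairs back to subgroups of $\pi_1 M \times \pi_1 M$ via the finite-index inclusion; an alternative is to observe that, since each $\widetilde{Q}_i$ is perfect and profinitely trivial, the composition $\pi_1 M \twoheadrightarrow \Delta \twoheadrightarrow \widetilde{Q}_i$ assembled from Lemma \ref{l:schur} applied to $\Delta$ (which uses $H_2(\Delta, \Z) = 0$) is itself automatically surjective, which is what the proof really needs. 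Either route reduces the Seifert case to a direct application of Theorem \ref{t:lots-of-P}, once the vanishing of $H_2$ for the three triples is in hand from \cite{BRS}.
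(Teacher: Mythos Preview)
Your overall plan matches the paper exactly: items (1) and (2) are quoted from \cite{BRS}, and items (3) and (4) are meant to follow from Theorem~\ref{t:lots-of-P}. The paper itself gives no further detail for the Seifert case beyond that sentence, so your attempt to verify the hypotheses with $\Gamma=\Delta(p,q,r)$ and $G=\pi_1M$ goes further than the text does, and the subtlety you flagged about the hypothesis ``$G$ maps onto a subgroup of finite index in $[\Gamma,\Gamma]$'' is genuine.

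However, neither of your proposed resolutions works as stated. For option~(a), if $P\hookrightarrow \pi_1M'\times\pi_1M'$ is a Grothendieck pair for the finite-index subgroup $\pi_1M'$, then the composite inclusion $P\hookrightarrow \pi_1M\times\pi_1M$ induces an isomorphism of $\widehat P$ with the proper open subgroup $\widehat{\pi_1M'\times\pi_1M'}$ of $\widehat{\pi_1M\times\pi_1M}$, not with the whole thing; so the ``transport'' step fails. For option~(b), Lemma~\ref{l:schur} applied to $\Delta$ produces an epimorphism $[\Delta,\Delta]\twoheadrightarrow\widetilde{Q}_i$, not $\Delta\twoheadrightarrow\widetilde{Q}_i$; since $H_1(\Delta,\Z)\neq 0$ for these triples, there is no reason for the lift to extend to $\Delta$, so the composition $\pi_1M\twoheadrightarrow\Delta\twoheadrightarrow\widetilde{Q}_i$ that you invoke is not available.

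The template that does work is the one the paper uses for its Kleinian examples in Section~\ref{s:galois}: one chooses the ambient relatively hyperbolic group $\Gamma$ so that the hyperbolic quotient $\Delta$ of $G=\pi_1M$ is already a finite-index subgroup of $[\Gamma,\Gamma]$ (compare Lemmas~\ref{l:weeks-commut} and~\ref{l:G4-commut}, where $\Gamma_W=[\Gamma_{\mathcal O}^1,\Gamma_{\mathcal O}^1]$ and $\Gamma_4=[\Delta_4,\Delta_4]$). For the Seifert triples this choice of ambient group, together with the verification that $H_2(\Gamma,\Z)=0$, is precisely what is supplied in \cite{BRS}; the present paper simply defers to that reference rather than reproving it.
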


\section{Kleinian Examples: Galois rigidity and Theorem  \ref{t:fig8}}\label{s:galois}

In this section we will sketch a proof Theorem \ref{t:fig8}.
We shall see that Theorem \ref{t:lots-of-P} can be applied to prove items (3) and (4) of Theorem \ref{t:fig8},
because the groups $\G$ in Theorem \ref{t:fig8} have finite index in hyperbolic orbifold groups with trivial second homology,
and they are the commutator subgroups of these orbifold groups. First, though, we explain 
why  for   Kleinian groups such as $\G$,   the range of groups in the  profinite genus of $\G\times\G$ is entirely
accounted for by Grothendieck pairs $P\hr \G\times\G$.   This phenomenon is related to the theory of
{\em Galois rigidity} that was developed in \cite{BMRS1} and was  adapted to direct products in \cite{BRS}.
We shall not rehearse this theory in detail, but we will recall the main points and indicate why
they apply to the groups in Theorem \ref{t:fig8}.  

\subsection{Trace fields and Galois rigidity} \label{s:Galois}
 
Let $\phi\colon \mathrm{SL}(2,\mathbb{C}) \to \mathrm{PSL}(2,\mathbb{C})$ be the quotient homomorphism, and if $H$ is a finitely generated subgroup of $\PSL(2,\mathbb{C})$,  set $H_1 = \phi^{-1}(H)$. It will be convenient to say that $H$ is 
{\em Zariski-dense} in $\PSL(2,\C)$ when what we actually mean is that $H_1$ is a Zariski-dense subgroup of $\SL(2,\C)$. The \textit{trace-field} of $H$ is defined to be the field 
\[ K_H=\mathbb{Q}(\mathrm{tr}(\gamma)~\colon~ \gamma \in H_1). \] 
If $K_H$ is a number field with ring of integers $R_{K_H}$, we say that $H$ has {\em integral traces} if $\tr(\gamma)\in R_{K_H}$ for all $\gamma \in H_1$.  

An important object associated to $H$ is the subalgebra of ${\rm{Mat}}(2,\C)$ generated by $H_1$; this is 
a {\em quaternion algebra} over $K_H$.

Suppose now that $\La$ is an abstract finitely generated group and  $\rho\colon \La\rightarrow \PSL(2,\C)$ a Zariski-dense representation with $K=K_{\rho(\La)}$ a number field of degree $n_K$. If $K=\Q(\theta)$ for some algebraic number $\theta$, then the Galois conjugates of $\theta$, say $\theta=\theta_1,\dots,\theta_{n_K}$, provide embeddings $\sigma_i\colon K\to\C$ defined by $\theta\mapsto\theta_i$.  These in turn can be used to build $n_K$ Zariski-dense non-conjugate representations $\rho_{\sigma_i}\colon \La \to \PSL(2,\C)$ with the property that $\tr(\rho_{\sigma_i}(\gamma))=\sigma_i(\tr\rho(\gamma))$ for all $\gamma\in \La$. 
One refers to these as {\em Galois conjugate representations}. 
The existence of these Galois conjugates shows that $|\mathrm{X}_{\mathrm{zar}}(\G,\mathbb{C})|\geq  n_{K_{\rho(\La)}}$, where  $\mathrm{X}_{\mathrm{zar}}(\La,\C)$ denotes the set of Zariski-dense representations $\G\to\PSL(2,\C)$ up to conjugacy.
  
\begin{definition}\label{def:galois-rigid}
Let $\La$ be a finitely generated group and $\rho\colon \La\to \PSL(2,\C)$ a Zariski-dense representation whose trace field $K_{\rho(\La)}$ is a number field. If
 $|\mathrm{X}_{\mathrm{zar}}(\La,\mathbb{C})|= n_{K_{\rho(\G)}}$, we say that $\G$ is {\em Galois rigid}  (with associated field $K_{\rho(\La)}$).
\end{definition}

\subsection{From Galois rigidity to profinite rigidity}\label{s:gr2pr}
In outline, the way in which
the groups $\G<{\rm{PSL}}(2,\C)$ in \cite{BMRS1} were proved to be profinitely rigid was as follows.  To begin, 
$\G$ needs to have only fiinitely many Zariski-dense representations into ${\rm{PSL}}(2,\C)$ and one needs
to understand these well enough to see that $\G$  is Galois rigid.  It is also important that 
all traces in these representations are
integers in the number field $K_\G$. 
The integrality of traces is used to ensure the boundedness of the non-archimedean representations  
$\G\to {\rm{SL}}(2,\-{\Q_p})$ obtained by  transporting Zariski dense representations 
$\G\to {\rm{SL}}(2,\C)$  via (non-continuous) field isomorphisms $\C\cong\-{\Q_p}$, for all primes $p$. This boundedness enables one to extend the representations to $\wh{\G}$,  then restrict to any finitely generated $\Lambda$ with
$\wh{\Lambda}\cong\wh{\G}$ to obtain bijections between the sets of bounded, Zariski-dense  characters of $\G$ 
and $\Lambda$ at every finite place (Lemma 4.6 of \cite{BMRS1}). 

With this control established, one can argue
that the abstract group $\Lambda$ is Galois rigid with a Zariski-dense representation $\rho\colon \La\to \PSL(2,\C)$
whose arithmetic data match those  of $\G$ -- see \cite[Theorem 4.8]{BMRS1} for a precise statement. In good situations
(which include the low-degree number fields of our examples),  this matching of data forces the number fields
$K_{\rho(\Lambda)}$ and $K_\G$ to be equal,  and likewise the quaternion algebras of $\G$ and $\rho(\Lambda)$.
 This in turn allows one to
conclude that (up to Galois conjugation) the image of $\rho:\La\to  \PSL(2,\C)$ is contained in $\G$ or a small
extension of it (we'll be a bit more specific about this finite extension in the next section).
In the examples where the argument concludes successfully,
 one can force the image to lie in $\G$ using specific information about the lattices in
${\rm{PSL}}(2,\C)$ that lie between $\G$ and the finite extension in question.

One then has to argue that every non-elementary subgroup $S<\G$ has a finite quotient that $\G$ does not
have \cite{prasad}, which means that $\wh{\Lambda}\cong\wh{\G}$ cannot map onto $\wh{S}$, 
from which we conclude that
$\rho(\Lambda)=\G$.   
The Hopf property for finitely generated profinite groups then tells us that $\wh{\rho}$ is
injective, and hence $\rho:\Lambda\to\G$ is an isomorphism.

\subsection{Extension to direct products}\label{s:dirProd}
 Because centralisers of non-trivial elements in Zariski-dense subgroups of
${\rm{PSL}}(2,\C)$ are virtually abelian,  if $\G$ is a lattice in ${\rm{PSL}}(2,\C)$ then the Zariski-dense representations
of $\G\times\G$ break into two families: those that factor through the projection to the first factor, and those that
factor through the projection to the second factor.  Correspondingly,   
 if $\G$ is Galois rigid with integral
traces, as in section (\ref{s:gr2pr}),  and if  $\Lambda$ is a finitely  generated, residually finite group with 
$\wh{\Lambda}\cong \wh{\G\times\G}$,  then one can prove that the Zariski-dense representations of $\Lambda$
into ${\rm{PSL}}(2,\C)$ break into two families,  each consisting to the representations that factor through one of
the coordinate projections of $\Lambda$ in $\wh{\Lambda}\cong \wh{\G\times\G}$.   This is explained in detail in \cite{BRS}.

If $\G$ is one of the groups for which one can deduce profinite rigidity from Galois rigidity because 
\cite[Theorem 4.8]{BMRS1} applies, then by applying the same argument to the projection of $\Lambda$ 
to each coordinate in  $\wh{\Lambda}\cong \wh{\G\times\G}$ (being careful with some matching of data)
one  can deduce that the image $\Lambda_1$
 of $\Lambda$ in $\wh{\G}\times 1$,  and the image $\Lambda_2$ in $1\times\wh{\G}$, are both isomorphic
 to $\G$ (see  \cite[Section 3]{BRS}). 
  By composing with an isomorphism of $\wh{\G\times\G}$, we may assume that $\Lambda_1$
 and $\Lambda_2$ are equal to $\G\times 1$ and $1\times\G$.  
  It follows that the original isomorphism $\wh{\Lambda} \cong \wh{\G\times\G}$ is induced
  by an inclusion $\Lambda\hr \G\times\G$. We refer the reader to \cite{BRS} for more details.
 
This concludes our explanation of the theory behind
Theorem \ref{t:fig8}(2). In the following two sections we will say a little more about
why this outline applies to the Weeks group $\GW$ and the fundamental group $\G_4$ of the 4-fold cyclic  
branched cover of the figure eight knot.  
 
 \subsection{The Weeks manifold}\label{s:weeks}

Let  $M_W={\mathbb H}^3/\Gamma_W$ denote the Weeks manifold; this is the unique closed orientable hyperbolic 3--manifold of minimal volume.  It is arithmetic and  
can be obtained by performing  $(-5,1)$ surgery on one component of the Whitehead link and $(5,2)$ surgery on the other.  The arithmetic structure of the Weeks manifold $M_W$ is described in \cite[Ch 4.8.3, Ch 9.8.2]{MR}): the invariant trace-field of 
$\G_W$ is ${\mathbb Q}(\theta)$ where $\theta^3-\theta^2+1=0$; this is a field of discriminant $-23$. Amongst other things, arithmeticity implies that $\Gamma_W$ has integral traces.   

Using SnapPy \cite{CDW}, a presentation for $\GW$ can be computed:
$$\<a,b\mid ababa^{-1}b^2a^{-1}b,abab^{-1}a^2b^{-1}ab\>.$$
From this, we see that $H_1(M_W,{\mathbb Z})\cong {\mathbb Z}/5{\mathbb Z}\times {\mathbb Z}/5{\mathbb Z}$.

It is shown in  \cite[Cor 6.3]{ReWan} that $\G_W$ has only three characters of irreducible representations,
into ${\rm{PSL}}(2,\C)$,  namely the characters of the faithful discrete representation, its complex conjugate, and a $\PSU(2)$--representation arising from the real ramified place of $B_W$.
It follows that $\GW$ is Galois rigid, and from above, has integral traces.  
It is explained in \cite[Corollary 4.11 and Example  4.13]{BMRS1} 
why the structure of  this field and
the invariant quaternion algebra $B_W$ allow one to follow the template for establishing profinite
rigidity that is sketched in Section \ref{s:gr2pr}, and hence why Section \ref{s:dirProd} applies.

The ``small finite extension" of $\GW$ that was alluded to in (\ref{s:gr2pr}) is the group of units $\G_\mathcal{O}^1$
for a choice of maximal order $\mathcal{O}$ in the quaternion algebra $B_W$.  The Weeks group $\GW$ is a normal
subgroup of index $3$ in $\G_\mathcal{O}^1$.  
According to  \cite{MedV}, the orbifold $\H^3/\G_\mathcal{O}^1$ can be described as $(3,0)$ Dehn surgery on $5_2$ (i.e.~$M_W$ is the $3$-fold cyclic branched cover of the knot $5_2$). 

The following lemma tells us that Theorem \ref{t:lots-of-P} applies to $\GW$.

\begin{lemma}\label{l:weeks-commut}
$\GW$ is the commutator subgroup of $\G_\mathcal{O}^1$, and $H_2(\G_\mathcal{O}^1,\Z)=0$.
\end{lemma}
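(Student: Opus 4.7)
The plan is to exploit the description in \cite{MedV} of the orbifold $\H^3/\G_\mathcal{O}^1$ as the result of orbifold $(3,0)$ Dehn surgery on the knot $5_2$. This identification gives
\[
\G_\mathcal{O}^1 \;\cong\; \pi_1(S^3\ssm 5_2)\,/\,\langle\!\langle\mu^3\rangle\!\rangle,
\]
where $\mu$ is a meridian of $5_2$. Since $5_2$ is a two-bridge knot, its group admits a two-generator, one-relator presentation in which both generators can be taken to be meridians. Adjoining the single relation $\mu^3=1$ therefore yields a presentation of $\G_\mathcal{O}^1$ with equal numbers (two) of generators and relators, i.e., a \emph{balanced} presentation in the sense of Lemma \ref{l:balanced}.

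From this presentation one reads off $H_1(\G_\mathcal{O}^1,\Z)$ directly. In the abelianisation of any knot group, all Wirtinger generators become equal to $\mu$, so $H_1(S^3\ssm 5_2,\Z) = \Z\cdot[\mu]$, and imposing $\mu^3=1$ gives $H_1(\G_\mathcal{O}^1,\Z) = \Z/3\Z$. Since $\G_\mathcal{O}^1/\GW$ is already cyclic of order $3$ (hence abelian), we have $[\G_\mathcal{O}^1,\G_\mathcal{O}^1]\le\GW$; and as
\[
[\G_\mathcal{O}^1 : [\G_\mathcal{O}^1,\G_\mathcal{O}^1]] \;=\; |H_1(\G_\mathcal{O}^1,\Z)| \;=\; 3 \;=\; [\G_\mathcal{O}^1 : \GW],
\]
comparing orders forces $[\G_\mathcal{O}^1,\G_\mathcal{O}^1] = \GW$. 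This establishes the first assertion.

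For the second assertion, the balanced presentation just produced, together with the finiteness of $H_1(\G_\mathcal{O}^1,\Z)$ established above, places us exactly in the hypothesis of Lemma \ref{l:balanced}, and we conclude $H_2(\G_\mathcal{O}^1,\Z)=0$. The only real subtlety in the argument is the first step: one must verify that the geometric statement of \cite{MedV} — that $M_W$ is the $3$-fold cyclic branched cover of $5_2$ — truly identifies $\G_\mathcal{O}^1$ with $\pi_1(S^3\ssm 5_2)/\langle\!\langle\mu^3\rangle\!\rangle$, i.e., that the peripheral element to be coned off is a meridian. This is routine orbifold bookkeeping (the branching locus becomes an order-$3$ cone locus in the quotient), but it is the one point where the geometric input is used, and should be stated carefully; once in hand, everything else is a short homological calculation.
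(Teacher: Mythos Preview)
Your proof is correct and follows essentially the same route as the paper: both use the identification of $\G_\mathcal{O}^1$ with $\pi_1(S^3\ssm 5_2)/\langle\!\langle\mu^3\rangle\!\rangle$ from \cite{MedV}, exploit the 2-bridge presentation of $5_2$ to obtain a balanced presentation of $\G_\mathcal{O}^1$ with $H_1\cong\Z/3\Z$, invoke Lemma~\ref{l:balanced} for $H_2=0$, and compare indices to identify $\GW$ with the commutator subgroup. Your version is slightly more explicit about the index comparison and the orbifold bookkeeping, but there is no substantive difference in method.
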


\begin{proof} As with any knot,  the  first homology of $\S^3\smallsetminus 5_2$ is
generated by a meridian curve $\mu\in \pi_1(\S^3\smallsetminus 5_2)$.  
As $5_2$ is a 2-bridge knot,  it has a presentation with 2 generators and 1 relation.   The orbifold group
$\G_\mathcal{O}^1$ is obtained by adding the relation $\mu^3=1$ to  
$\pi_1(\S^3\smallsetminus 5_2)$, so $\G_\mathcal{O}^1$ has a presentation 
with 2 generators and 2 relations and its abelianization is cyclic of order $3$.  
It follows  from Lemma \ref{l:balanced} 
that $H_2(\G_\mathcal{O}^1,\Z)=0$. 

As $\GW< \G_\mathcal{O}^1$
is normal of index 3, it must coincide with the commutator subgroup.
\end{proof}

\begin{remark}
In \cite{BMRS1} we also established that the group $\G_\mathcal{O}^1$ and its normalizer in $\PSL(2,\C)$ (denoted by $\Gamma_{\mathcal{O}}$) are profinitely rigid. Moreover, $\G_\mathcal{O}^1 = [\Gamma_{\mathcal{O}},\Gamma_{\mathcal{O}}]$,
and so in principle we can apply the same argument as we did above for $\GW$.  Unfortunately, we do not know that $H_2(\Gamma_{\mathcal{O}},\Z)=0$. In addition, \cite{MedV} provides the lattice of super groups between $\GW$ and $\Gamma_{\mathcal{O}}$, and although we
can prove profinite rigidity of certain of these using \cite{prasad} (e.g the orbifold groups of $9_{49}(2)$ and $7^2_1(2,3)$ in the notation of \cite{MedV}) we are unable to fulfill the hypothesis of Theorem \ref{t:lots-of-P} to build other uncountable familes.
\end{remark}
 
\begin{remark} In our previous work \cite{BRS},   we showed that if $\Gamma$ is one of the Seifert groups of Theorem \ref{t:mainBRS} then there are {\em{no}} Grothendieck pairs $P\hookrightarrow \Gamma\times\Gamma$ 
with {\em $P$ finitely presented}.  In contrast,  we do not yet know if this holds in the case of $\GW\times\GW$.
\end{remark}

\subsection{A branched cover of the figure-eight knot} 
\label{fig8_section}

Let $K\subset S^3$ denote the figure-eight knot, let  $Q_n$ be the orbifold obtained by 
$(n,0)$-Dehn filling on $K$ and let $M_n$ be the $n$-fold cyclic branched cover of $K$.
Note that $M_n$ can also be regarded as an $n$-fold cyclic (orbifold) cover of $Q_n$; this is
the maximal abelian cover.
 When $n\geq 4$,  the manifold $M_n$ and the orbifold $Q_n$ are hyperbolic 
(see \cite{CHK} for example).  Let $\Gamma_n=\pi_1M_n$.  An explicit presentation of $\G_4$ is
$$
\G_4=\<a,b \mid b a^{-2} b a^{-1} b^2 a b^2 a^{-1},    a^2 b a b^2 a b a^2 b^{-1}\>.
$$

It is proved in \cite[Section 4.1]{Re0} that $\G_4$ is an arithmetic Kleinian group with (invariant) trace-field $k=\Q(\sqrt{-3})$ 
and that its invariant quaternion algebra $B$ over $k$  is ramified at the places $\nu_2$ and $\nu_3$ associated to the prime ideals of norm $4$ and $3$, respectively, in $k$.    We  explained in \cite{prasad} why this arithmetic information,
together with detailed information about the small number of lattices in ${\rm{PSL}}(2,\C)$ that contain $\G_4$,
is sufficient to implement the basic strategy from  \cite{BMRS1} for establishing profinite rigidity.
This  allows us to follow the analysis of profinite equivalences $\wh{\Lambda}\cong \wh{\G}_4\times \wh{\G}_4$
outlined in section \ref{s:gr2pr} and deduce, as we did for the Weeks manifold, that Theorem \ref{t:fig8}(2)
holds for $\G_4$. To complete the proof of Theorem \ref{t:fig8}
we need a substitute for Lemma \ref{l:weeks-commut}.  The proof of the following lemma is entirely
similar to that of Lemma \ref{l:weeks-commut}: once again we have an orbifold group obtained by killing a
power of the generator of homology in a 2-bridge knot group. In this
case, $\G_4 = [\Delta_4, \Delta_4]$ has index $4$ in $\Delta_4$.

\begin{lemma}\label{l:G4-commut}
$\G_4$ is the commutator subgroup of the fundamental group $\Delta_4$ of the
orbifold $Q_4$ described above,  and  $H_2(\Delta_4,\Z)=0$.
\end{lemma}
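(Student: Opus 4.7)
The plan is to mirror the proof of Lemma \ref{l:weeks-commut} step by step, exploiting the fact that the figure-eight knot is a 2-bridge knot and that $Q_4$ is obtained by Dehn filling.

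First I would recall that, since the figure-eight knot $K$ is a 2-bridge knot, its group $\pi_1(S^3\smallsetminus K)$ admits a presentation on 2 generators and 1 relation (with abelianisation $\Z$ generated by any meridian $\mu$). The orbifold $Q_4$ is obtained from $S^3\smallsetminus K$ by $(4,0)$-Dehn filling, so $\Delta_4=\pi_1^{\rm orb}(Q_4)$ is obtained by adjoining the single relation $\mu^4=1$. This yields a presentation of $\Delta_4$ on $2$ generators and $2$ relations, with $H_1(\Delta_4,\Z)\cong\Z/4\Z$. In particular $H_1(\Delta_4,\Z)$ is finite, so Lemma \ref{l:balanced} gives $H_2(\Delta_4,\Z)=0$, which is the second assertion.

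For the first assertion, I would observe that the manifold $M_4$ is the $4$-fold cyclic branched cover of $K$, and that as an orbifold cover $M_4\to Q_4$ it is exactly the maximal abelian (cyclic) cover of degree $4$. Thus $\G_4\triangleleft \Delta_4$ is normal of index $4$ with quotient $\Delta_4/\G_4\cong\Z/4\Z$, which is abelian; hence $[\Delta_4,\Delta_4]\subseteq \G_4$. Since $H_1(\Delta_4,\Z)\cong\Z/4\Z$ has order equal to the index $[\Delta_4:\G_4]$, the containment must be an equality: $\G_4=[\Delta_4,\Delta_4]$.

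There is no real obstacle here; the only point that requires a touch of care is the identification of $\G_4$ as the kernel of the abelianisation map $\Delta_4\to \Z/4\Z$, i.e.~the identification of the $4$-fold branched cover of $S^3$ over $K$ with the maximal abelian cover of the orbifold $Q_4$. This is a standard fact about cyclic branched covers of knots (the branched cover corresponds to pulling back the covering $S^1\to S^1$ of degree $4$ along the map $\pi_1(S^3\smallsetminus K)\to H_1\to\Z/4\Z$, and then filling in the branch locus), and it is exactly the observation used in the proof of Lemma \ref{l:weeks-commut}.
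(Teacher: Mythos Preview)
Your proof is correct and follows exactly the approach the paper intends: the paper explicitly says the proof is ``entirely similar to that of Lemma \ref{l:weeks-commut}'', using that the figure-eight knot is 2-bridge to get a balanced presentation of $\Delta_4$ with finite abelianisation $\Z/4\Z$, then invoking Lemma \ref{l:balanced} for $H_2=0$ and the index/abelian-quotient count for the commutator identification. The paper has also already recorded (just above the lemma) that $M_n\to Q_n$ is the maximal abelian cover, so your ``only point requiring care'' is taken as given in context.
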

 
We also showed in \cite{prasad} that the manifold known as $\rm{Vol}(3)$ has profinitely rigid fundamental group, as well as the corresponding group $\G_\mathcal{O}^1$ and maximal group $\Gamma_\mathcal{O}$ (for a maximal order 
$\mathcal{O}$ in the quaternion algebra $B$ as above).  
However, as with the supergroups of $\GW$ described above, we are unable to arrange for the hypotheses of Theorem \ref{t:lots-of-P} to hold for these groups. 

\subsection{Other examples}

We close by pointing out some families of Kleinian groups, which, whilst not replicating all of Theorem \ref{t:fig8}, do  allow us to prove some additional partial results which we summarized in the following theorem.

\begin{theorem}
\label{more_Kleinian}
Let $K\subset S^3$ be a hyperbolic knot,  let $X_n$ be the $n$-fold cyclic branched cover of $K$, and let $\Gamma_n = \pi_1(X_n)$.  If $n>3$, then, $X_n$ is hyperbolic and 
there exist uncountably many non-isomorphic  groups $P_\lambda$ with embeddings $P_\lambda\hookrightarrow \G_n\times \G_n$
that induce an isomorphism of profinite completions.  

Moreover, when $K$ is the figure-eight knot and $n$ is prime,  for every finitely generated, residually finite group $\Lambda$, for with  $\wh{\Lambda}\cong\wh{\G}_n\times\wh{\G}_n$  
there exists an embedding $\Lambda\hookrightarrow \G_n\times\G_n$ 
inducing this isomorphism of profinite completions.\end{theorem}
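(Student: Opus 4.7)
The plan is to treat the two conclusions separately: the uncountable-ambiguity statement reduces to Theorem \ref{t:lots-of-P} once a suitable ambient orbifold group is identified, while the profinite-genus statement for the figure-eight knot follows the Galois-rigidity template developed in Sections \ref{s:gr2pr} and \ref{s:dirProd}. For the first conclusion I would introduce the orbifold $Q_n$ obtained by $(n,0)$-Dehn filling on $K$ and set $\Delta_n=\pi_1^{\mathrm{orb}}(Q_n)$. Since $X_n\to Q_n$ is the maximal abelian (orbifold) cover, $\Gamma_n$ is normal in $\Delta_n$ with cyclic quotient $\Z/n\Z$, and by the index count $\Gamma_n=[\Delta_n,\Delta_n]$. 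I would then verify the hypotheses of Theorem \ref{t:lots-of-P} with ambient group $\Delta_n$ and $G=\Gamma_n$: when $n>3$ the orbifold $Q_n$ is hyperbolic, so $\Delta_n$ is a finitely presented, non-elementary (hence relatively) hyperbolic group, and $\Gamma_n$ is Gromov-hyperbolic with virtually cyclic centralisers of non-trivial elements. Any torsion in $\Delta_n$ is conjugate into the cyclic subgroup of order $n$ generated by the meridian, so only primes dividing $n$ occur as orders, leaving infinitely many primes unused. A deficiency-one knot-group presentation with the added relator $\mu^n$ yields a presentation of $\Delta_n$ with equal numbers of generators and relations, and since $H_1(\Delta_n,\Z)=\Z/n\Z$ is finite, Lemma \ref{l:balanced} gives $H_2(\Delta_n,\Z)=0$. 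Theorem \ref{t:lots-of-P} then produces the uncountable family of non-isomorphic embeddings $P_\lambda\hr\Gamma_n\times\Gamma_n$ inducing isomorphisms of profinite completions.

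For the second conclusion the plan is to imitate the argument outlined for Theorem \ref{t:fig8}(2). Building on the arithmetic description of cyclic branched covers of the figure-eight knot in \cite{Re0}, I would first establish that for $n$ prime $\Gamma_n$ is Galois rigid with integral traces, and that $\mathrm{X}_{\mathrm{zar}}(\Gamma_n,\C)$ consists precisely of Galois conjugates of the discrete faithful representation together with some explicit characters of bounded image. Together with enough information about the lattices intermediate between $\Gamma_n$ and its normaliser in $\PSL(2,\C)$, this allows one to run the template of \cite[Theorem 4.8]{BMRS1} for $\Gamma_n$ and then to apply the direct-product reduction of \cite{BRS}: for any finitely generated residually finite $\Lambda$ with $\wh{\Lambda}\cong\wh{\Gamma}_n\times\wh{\Gamma}_n$, the Zariski-dense characters of $\Lambda$ split into two packets corresponding to the coordinate projections, each packet produces a copy of $\Gamma_n$ inside the corresponding factor, and after adjusting by an automorphism of $\wh{\Gamma}_n\times\wh{\Gamma}_n$ these two images coincide with $\Gamma_n\times 1$ and $1\times \Gamma_n$, delivering the required embedding $\Lambda\hr\Gamma_n\times\Gamma_n$ that induces the given profinite isomorphism.

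The main obstacle is the arithmetic verification underlying the second conclusion: identifying the invariant trace field and quaternion algebra of $\Gamma_n$ for every prime $n$, confirming Galois rigidity and integrality of traces uniformly, and controlling the supergroups of $\Gamma_n$ in $\PSL(2,\C)$ well enough to rule out the possibility that a matching representation of $\Lambda$ lands in a proper non-elementary overgroup. The case $n=4$ is settled in Section \ref{fig8_section}, but the general prime case requires either a uniform arithmetic treatment or a prime-by-prime analysis, together with a Prasad-style argument showing that every non-elementary proper subgroup of $\Gamma_n$ has a finite quotient that $\Gamma_n$ does not; by contrast, the first conclusion is a clean application of Theorem \ref{t:lots-of-P} via Lemma \ref{l:balanced}.
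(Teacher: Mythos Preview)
Your proposal is correct and follows essentially the same route as the paper: for the first conclusion you introduce the orbifold group of $(n,0)$-filling, identify $\Gamma_n$ as its commutator subgroup, use the deficiency-one knot presentation together with Lemma~\ref{l:balanced} to get $H_2=0$, and then invoke Theorem~\ref{t:lots-of-P}; for the second conclusion you invoke Galois rigidity and the direct-product reduction of Section~\ref{s:dirProd}. The only point on which you diverge is that you treat the Galois rigidity of $\Gamma_n$ for prime $n$ as an obstacle to be overcome, whereas the paper simply cites \cite{prasad}, where this is already established---so the ``main obstacle'' you flag has in fact been resolved in the literature, and no prime-by-prime arithmetic analysis is needed here.
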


\begin{proof} That $X_n$ is hyperbolic is a consequence of the Orbifold Theorem (see \cite{CHK}). As in the discussion in section \ref{fig8_section}, $\Gamma_n$ is the commutator subgroup of the orbifold group $H_n$ obtained by 
$(n,0)$ Dehn surgery on $K$.
The  group of any non-trivial knot in $S^3$ has deficiency one (as one can see from a Wirtinger presentation),  so adjoining the relation $\mu^n$ where $\mu$ is a meridian of $K$ we get a balanced 
presentation of the orbifold group $H_n$.   
From Lemma \ref{l:balanced} we deduce that  $H_2(H_n,\Z)=0$.  Thus we can apply Theorem \ref{t:lots-of-P} 
to obtain the  statement in the first paragraph of the theorem.

For the assertion in the second paragraph,
we note that it was shown in \cite{prasad} that when $K$ is the figure-eight knot and $n>3$ prime, the groups $\G_n$ are Galois rigid. Hence, the discussion in section \ref{s:dirProd} can be applied to the groups $\G_n$ to complete the proof.
\end{proof}

\section{Some questions}
\label{s:finite}

We conclude by collecting some questions that arise out of our work.

\begin{question}
Which hyperbolic triangle groups $\D(p,q,r)$ admit a surjection to a finitely presented infinite group $Q$ with
$\wh{Q}=1$ and $H_2(Q,\Z)=0$?
\end{question}

\begin{question} For the groups $\GW$ abd $\G_4$ from Theorem \ref{t:fig8}, 
do there exist Grothendieck pairs $P\hr \GW\times\GW$ (or $P\hr \G_4 \times\G_4$) with $P$ finitely presented?
\end{question}

\begin{question}
Do there exist non-elementary hyperbolic groups $H$ for which the strong profinite genus of $H\times H$ is trivial,
i.e.  the group $H$ has the property that there does not exist any Grothendieck pairs $P\hr H\times H$ with $P\neq H\times H$ finitely generated  
(a property that is called {\em left} Grothendieck rigid in \cite{JL}).
\end{question} 

As the reader will have noted, many  of the issues in the current paper and \cite{BRS} involve arranging for the
condition $H_2(\G,\Z)=0$ to hold.  This prompts that following
question, a positive answer to which would simplify various constructions herein and in \cite{BRS}.

\begin{question} Does there exist an integral 
homology 3-sphere $\Sigma$ 
with infinite fundamental group such that $\pi_1\Sigma$ is profinitely rigid?
\end{question}


\end{document}